\newtheorem{theorem}{Theorem}[section]
\newtheorem{lemma}[theorem]{Lemma}
\newtheorem{proposition}[theorem]{Proposition}
\newtheorem{corollary}[theorem]{Corollary}
\theoremstyle{definition}
\newtheorem{example}[theorem]{Example}
\begin{document}


%
%

\title[Weitzenb\"ock derivations of free algebras]
{Weitzenb\"ock derivations\\
of free metabelian associative algebras}

\author[Dangovski, Drensky, F\i nd\i k]
{Rumen Dangovski, Vesselin Drensky, {\c S}ehmus Findik}
\address{Massachusetts Institute of Technology,\\
428 Memorial Drive, Cambridge, MA 02139, U.S.A.}
\email{rumenrd@mit.edu}

\address{Institute of Mathematics and Informatics,\\
Bulgarian Academy of Sciences,
1113 Sofia, Bulgaria}
\email{drensky@math.bas.bg}

\address{Department of Mathematics,\\
\c{C}ukurova University, 01330 Balcal\i,
 Adana, Turkey}
\email{sfindik@cu.edu.tr}

\maketitle

\begin{abstract} By the classical theorem of Weitzenb\"ock the algebra of constants $K[X_d]^{\delta}$
of a nonzero locally nilpotent linear derivation $\delta$ of the polynomial algebra
$K[X_d]=K[x_1,\ldots,x_d]$ in several variables over a field $K$ of characteristic 0
is finitely generated. As a noncommutative generalization one considers the algebra of constants
$F_d({\mathfrak V})^{\delta}$ of a locally nilpotent linear derivation $\delta$ of a finitely generated
relatively free algebra $F_d({\mathfrak V})$ in a variety $\mathfrak V$ of unitary associative algebras over $K$.
It is known that $F_d({\mathfrak V})^{\delta}$ is finitely generated if and only if $\mathfrak V$
satisfies a polynomial identity which does not hold for the algebra $U_2(K)$ of $2\times 2$ upper triangular matrices.
Hence the free metabelian associative algebra $F_d=F_d({\mathfrak M})=F_d({\mathfrak N}_2{\mathfrak A})=F_d(\text{var}(U_2(K)))$
is a crucial object to study.
We show that the vector space of the constants $(F_d')^{\delta}$ in the commutator ideal $F_d'$
is a finitely generated $K[U_d,V_d]^{\delta}$-module, where $\delta$ acts on $U_d$ and $V_d$ in the same way as on $X_d$.
For small $d$, we calculate the Hilbert series of $(F_d')^{\delta}$
and find the generators of the $K[U_d,V_d]^{\delta}$-module $(F_d')^{\delta}$.
This gives also an (infinite) set of generators of the algebra
$F_d^{\delta}$.
\end{abstract}

\keywords{Free metabelian associative algebras; algebras of constants; Weitzenb\"ock derivations.}

\subjclass[2010]{16R10; 16S15; 16W20; 16W25; 13N15; 13A50.}

\section{Introduction}

Recall that a derivation of an algebra $R$ over a field $K$ is a linear operator $\delta:R\to R$ such that
$\delta(uv)=\delta(u)v+u\delta(v)$ for all $u,v\in R$. In this paper we fix a field $K$
of characteristic 0, an integer $d\geq 2$ and a set of variables
$X_d=\{x_1,\ldots,x_d\}$. For the polynomial algebra $K[X_d]=K[x_1,\ldots,x_d]$
in $d$ variables, every mapping $\delta: X_d\to K[X_d]$ can be extended
to a derivation of $K[X_d]$ which we shall denote by the same symbol $\delta$.
We assume that $\delta$ is a Weitzenb\"ock derivation, i.e., acts as a nonzero nilpotent linear operator of the vector space
$KX_d$ with basis $X_d$. Up to a change of the basis of $KX_d$, the derivation $\delta$ is determined by its
Jordan normal form $J(\delta)$
with Jordan cells $J_1,\ldots,J_s$ with zero diagonals.
Hence the essentially different Weitzenb\"ock derivations
are in a one-to-one correspondence with the partition
$(p_1+1,\ldots,p_s+1)$ of $d$, where $p_1\geq \cdots\geq p_s\geq 0$,
$(p_1+1)+\cdots+(p_s+1)=d$, and the correspondence is given
in terms of the size $(p_i+1)\times (p_i+1)$ of the Jordan cells $J_i$ of $J(\delta)$, $i=1,\ldots,s$.
We shall denote the derivation corresponding to this partition by $\delta(p_1,\ldots,p_s)$.
The algebra of constants of $\delta$
\[
K[X_d]^{\delta}=\ker{\delta}=\{u\in K[X_d]\mid \delta(u)=0\}
\]
can be studied also with methods of classical invariant theory because it coincides with
the algebra of invariants $K[X_d]^{UT_2(K)}$ of the unitriangular group
$UT_2(K)\cong\{\exp(\alpha\delta)\mid\alpha\in K\}$.
The classical theorem of Weitzenb\"ock \cite{W} states that for any Weitzenb\"ock derivation $\delta$
the algebra of constants $K[X_d]^{\delta}$ is finitely generated, see the book by Nowicki \cite{N}
for more details.

The polynomial algebra $K[X_d]$ is free in the class of all commutative algebras.
As a noncommutative generalization one considers
the relatively free algebra $F_d({\mathfrak V})$ in a variety $\mathfrak V$ of unitary associative algebras over $K$,
see e.g., \cite{D1} for a background on varieties of algebras.
As in the polynomial case, $F_d({\mathfrak V})$ is freely generated by the set $X_d$ and every map
$X_d\to F_d({\mathfrak V})$ can be extended to a derivation of $F_d({\mathfrak V})$. Again, we shall call
the derivations $\delta$ which act as nilpotent linear operators of the vector space
$KX_d$ Weitzenb\"ock derivations and shall denote them by $\delta(p_1,\ldots,p_s)$.
In the theory of varieties of unitary associative algebras
there is a dichotomy. Either all polynomial identities of the variety  $\mathfrak V$ follow from the metabelian identity $[x_1,x_2][x_3,x_4]=0$
(which is equivalent to the condition that $\mathfrak V$ contains the algebra $U_2(K)$
of $2\times 2$ upper triangular matrices), or $\mathfrak V$
satisfies an Engel polynomial identity $[x_2,\underbrace{x_1,\ldots,x_1}_{n\text{ times}}]=x_2\text{ad}^nx_1=0$.
Drensky and Gupta \cite{DG} studied Weitzenb\"ock derivations $\delta$ acting on $F_d({\mathfrak V})$.
In particular, if $U_2(K)\in\mathfrak V$, then the algebra of constants $F_d({\mathfrak V})^{\delta}$ is not finitely generated.
If $U_2(K)$ does not belong to $\mathfrak V$, a result of Drensky \cite{D2}
gives that the algebra $F_d({\mathfrak V})^{\delta}$ is finitely generated.
Hence the free metabelian associative algebra
\[
F_d=F_d({\mathfrak M})=F_d({\mathfrak N}_2{\mathfrak A})=F_d(\text{var}(U_2(K)))
\]
is a crucial object in the study of Weitzenb\"ock derivations.

The present paper may be considered as a continuation of our recent paper \cite{DDF}
where we have studied the algebra of constants $(L_d/L_d'')^{\delta}$ of the Weitzenb\"ock derivation $\delta$ acting on the
free metabelian Lie algebra $L_d/L_d''$, where $L_d$ is the free $d$-generated Lie algebra. As in the Lie case we show
that the algebra of constants $F_d^{\delta}$ when $\delta$ acts on the free metabelian associative algebra
$F_d=F_d(\text{var}(U_2(K)))$ is very close to finitely generated. In the Lie case the commutator ideal $L_d'/L_d''$ has
a natural structure of a $K[X_d]$-module. In \cite{DDF} we have seen that the vector space of constants $(L_d'/L_d'')^{\delta}$
is a finitely generated $K[X_d]^{\delta}$-module. In the associative case the commutator ideal $F_d'$ is a $K[X_d]$-bimodule.
We prove that the vector space of constants $(F_d')^{\delta}$ is a finitely generated
$K[U_d,V_d]^{\delta}$-module, where $\delta$ acts on the variables $U_d$ and $V_d$ in the same way as on the variables $X_d$.
Then, using methods of \cite{BBD} we give an algorithm how to calculate the Hilbert series of $F_d^{\delta}$
and calculate it for small $d$.
If the Jordan form of $\delta$ contains a $1\times 1$ cell, we may assume that $\delta$ acts as a nilpotent linear operator
on $KX_{d-1}$ and $\delta(x_d)=0$. In this situation we express the Hilbert (or Poincar\'e) series of
$F_d^{\delta}$ and the generators of the $K[U_d,V_d]^{\delta}$-module $(F_d')^{\delta}$ in terms of the Hilbert series of
$F_{d-1}^{\delta}$ and the generators of the $K[U_{d-1},V_{d-1}]^{\delta}$-module $(F_{d-1}')^{\delta}$.
Finally, we find the generators of the $K[U_d,V_d]^{\delta}$-module $(F_d')^{\delta}$  for $d\leq 6$
(and $\delta\not=\delta(5),\delta(3,1)$), which
gives also an explicit (infinite) set of generators of the algebra $F_d^{\delta}$.

\section{Finite generation}

Let $A_d=K\langle X_d\rangle$ be the free unitary associative
algebra of rank $d$ with free generating set $X_d=\{x_1,\ldots,x_d
\}$, and let $A_d'$ be its commutator ideal, i.e., the ideal
generated by all commutators
\[
[f,g]=f\text{ad}g=fg-gf,\quad f,g\in A_d.
\]
The metabelian variety $\mathfrak M$ consists of all associative algebras satisfying the polynomial identity $[x_1,x_2][x_3,x_4]=0$.
The free metabelian algebra $F_d=F_d(\mathfrak M)$ of rank $d$ is isomorphic to the factor-algebra $A_d/(A'_d)^2$.
Clearly, $K[X_d]\cong A_d/A_d'\cong F_d/F_d'$.
We assume that all Lie commutators are left normed and
\[
[x_1,\ldots,x_{n-1},x_n]=[[x_1,,\ldots,x_{n-1}],x_n]=[x_1,\ldots,x_{n-1}]\text{ad}x_n.
\]
It is well known, see \cite{D1}, that the commutator ideal $F_d'$ of $F_d$ has a basis
consisting of all
\[
x_1^{a_1}\cdots x_d^{a_d}[x_{j_1},x_{j_2},x_{j_3},\ldots,x_{j_n}],\quad a_i\geq 0, 1\leq j_i\leq d, j_1>j_2\leq j_3\leq\cdots\leq j_n.
\]
The metabelian identity implies the identity
\[
x_{i_{\rho(1)}}\cdots x_{i_{\rho(m)}}[x_{j_1},x_{j_2},x_{j_{\sigma(3)}},\ldots,x_{j_{\sigma(n)}}]
=x_{i_1}\cdots x_{i_m}[x_{j_1},x_{j_2},x_{j_3},\ldots,x_{j_n}],
\]
where $\rho$ and $\sigma$ are arbitrary permutations of $1,\ldots,m$ and $3,\ldots,n$, respectively. This identity allows to define
an action of the polynomial algebra $K[U_d,V_d]$ on $F_d'$, where
$U_d=\{u_1,\ldots,u_d\}$ and $V_d=\{v_1,\ldots,v_d\}$ are two sets of commuting variables:
If $f\in F_d'$, then
\[
fu_i=x_if, \quad fv_i=f\text{\rm ad}x_i, \quad i=1,\ldots,d.
\]
In this way, the vector subspace $F_d'$ of $F_d$ is a $K[U_d,V_d]$-module (or a $K[X_d]$-bimodule).

Now we need an embedding of $F_d$ into a wreath product. The construction is a partial case
of the construction of Lewin \cite{L} used in \cite{DG} and is similar to the construction of
Shmel'kin \cite{Sh} in the case of free metabelian Lie algebras (as used in \cite{DDF}).
Let $Y_d = \{y_1, \ldots, y_d\}$ be a set of commuting variables and let
\[
M_d=\bigoplus_{i=1}^da_iK[U_d,V_d']=A_dK[U_d,V_d']
\]
be the free $K[U_d,V_d']$-module of rank $d$ freely generated by $A_d=\{a_1,\ldots,a_d\}$, where $V_d'=\{v_1',\ldots,v_d'\}$.
We equip $M_d$ with trivial multiplication
$M_d\cdot M_d=0$ and with a structure of a free $K[Y_d]$-bimodule with the action of $K[Y_d]$ defined by
\[
y_ja_i = a_iu_j, \quad a_iy_j = a_iv_j', \quad i,j = 1, \ldots, d.
\]
Then the algebra $W_d=K[Y_d]\rightthreetimes M_d$ satisfies the metabelian identity and hence belongs to $\mathfrak M$.
The following proposition is a partial case of \cite{L}.

\begin{proposition}\label{embedding}
The mapping $\imath :x_j\to y_j+a_j$, $j=1,\ldots,d$, defines an embedding $\imath$ of $F_d$ into $W_d$.
\end{proposition}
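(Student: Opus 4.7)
The plan is to prove two things: (i) that $\imath$ extends to an algebra homomorphism $F_d\to W_d$, and (ii) that this homomorphism is injective.

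For (i), the paper already states that $W_d$ satisfies the metabelian identity and therefore belongs to $\mathfrak M$. Hence by the universal property of the relatively free algebra $F_d=A_d/(A_d')^2$, the set-theoretic map $x_j\mapsto y_j+a_j$ extends uniquely to an algebra homomorphism $\imath:F_d\to W_d$.

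For (ii), the strategy is to exploit the semidirect decomposition $W_d=K[Y_d]\rightthreetimes M_d$. The projection $W_d\to W_d/M_d\cong K[Y_d]$ composed with $\imath$ sends $x_j\mapsto y_j$, and its kernel contains $F_d'$ since $[y_i+a_i,y_j+a_j]\in M_d$ by a direct computation. Thus $\imath$ fits in a commutative diagram whose top row is $0\to F_d'\to F_d\to K[X_d]\to 0$ and whose bottom row is $0\to M_d\to W_d\to K[Y_d]\to 0$, and the induced map on quotients $K[X_d]\to K[Y_d]$, $x_j\mapsto y_j$, is an isomorphism. Therefore it suffices to show that the restriction $\imath|_{F_d'}:F_d'\to M_d$ is injective.

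To see this, I would compute the image of the given basis
\[
x_1^{a_1}\cdots x_d^{a_d}[x_{j_1},x_{j_2},x_{j_3},\ldots,x_{j_n}]
\]
of $F_d'$ directly in $M_d=\bigoplus_{i=1}^d a_iK[U_d,V_d']$. Using $y_ja_i=a_iu_j$, $a_iy_j=a_iv_j'$ and $M_d\cdot M_d=0$, one finds $[x_i,x_j]\mapsto a_j(u_i-v_i')-a_i(u_j-v_j')$; iterating the derivation rule $[m,x_k]=m y_k-y_k m$ on $m=a_ig\in M_d$ yields $[a_ig,x_k]=a_i(v_k'-u_k)g$, while left multiplication by $x_i$ simply multiplies the coefficient by $u_i$. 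Assembling these gives
\[
\imath(x_1^{a_1}\cdots x_d^{a_d}[x_{j_1},\ldots,x_{j_n}])
=u_1^{a_1}\cdots u_d^{a_d}\bigl(a_{j_2}(u_{j_1}-v_{j_1}')-a_{j_1}(u_{j_2}-v_{j_2}')\bigr)\prod_{k=3}^{n}(v_{j_k}'-u_{j_k}).
\]

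What remains is to verify that these images are linearly independent in $M_d$. Projecting onto the $a_{j_2}$- and $a_{j_1}$-summands and expanding in the variables $v_1',\ldots,v_d'$, one reads off the exponent vector $(a_1,\ldots,a_d)$ from the $u$-monomial and the multiset $\{j_2,j_3,\ldots,j_n\}$ from the $v'$-factors; the ordering constraint $j_1>j_2\leq j_3\leq\cdots\leq j_n$ identifies $j_2$ as the minimum of this multiset and distinguishes the outer index $j_1$. The main obstacle is this final bookkeeping step, since one must show that contributions of different basis monomials to the same $a_i$-component cannot cancel; but the asymmetry between the unique pair $(j_1,j_2)$ of indices appearing linearly in the leading factor and the ordered tail $(j_3,\ldots,j_n)$ appearing symmetrically in the product forces each basis element to be reconstructible from its image.
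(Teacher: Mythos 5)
The paper gives no proof of this proposition at all: it is quoted as a special case of Lewin's theorem \cite{L}. So your self-contained argument is a genuinely different (and more elementary) route, and its skeleton is sound: $W_d\in\mathfrak M$ yields the homomorphism by relative freeness of $F_d$; the projection $W_d\to W_d/M_d\cong K[Y_d]$ shows $\ker\imath\subseteq F_d'$; and your computation of the images of the standard basis of $F_d'$ is correct (in the paper's variables $v_i=v_i'-u_i$ it reads $(a_{j_1}v_{j_2}-a_{j_2}v_{j_1})\,u_1^{a_1}\cdots u_d^{a_d}v_{j_3}\cdots v_{j_n}$, matching the displayed formula after Proposition \ref{embedding}).

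The one place you are too brisk is the final linear-independence step, and the difficulty you flag in passing is real: contributions of \emph{different} basis elements to the \emph{same} $a_i$-summand can literally coincide. For $d=3$, both $[x_2,x_1,x_3]$ and $[x_3,x_1,x_2]$ have $a_1$-component $-v_2v_3$, so ``projecting onto the $a_{j_1}$- and $a_{j_2}$-summands and reading off the data'' fails unless you choose the component carefully. A clean way to finish: since $\imath$ preserves the $\mathbb Z^d$-multigrading, fix a multidegree; in a putative dependence let $m$ be the largest index occurring as an outer index $j_1$ among the terms with nonzero coefficient, and examine the $a_m$-component. A term can only meet $a_m$ through $j_1=m$ or $j_2=m$, and $j_2=m$ would force $j_1>m$; so only the terms with $j_1=m$ contribute, each contributing $\lambda\,u^{a}v_{j_2}v_{j_3}\cdots v_{j_n}$. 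The ordering constraint $j_2\le j_3\le\cdots\le j_n$ makes $j_2$ the minimum of the resulting $v$-monomial's support and the tail its sorted remainder, so distinct admissible tuples $(a;j_2,\ldots,j_n)$ give distinct monomials in $K[U_d,V_d]$; hence all coefficients with $j_1=m$ vanish, and one descends on $m$. With that paragraph written out, your proof is complete.
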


If
\[
w=\sum_{i>j}\sum_{k,l}\beta_{ijkl}X_d^k[x_i,x_j]\text{\rm ad}^lX_d,
\]
\[
\beta_{ijkl}\in K,\quad X_d^k=x_1^{k_1}\cdots x_d^{k_d},\quad \text{\rm ad}^lX_d=(\text{ad}^{l_1}x_1)\cdots(\text{ad}^{l_d}x_d),
\]
then
\[
\imath(w)=\sum_{i>j}\sum_{k,l}\beta_{ijkl}(a_iv_j-a_jv_i)U_d^kV_d^l,\quad U_d^k=u_1^{k_1}\cdots u_d^{k_d},\quad
V_d^l=v_1^{l_1}\cdots v_d^{k_l},
\]
and $V_d=\{v_1,\ldots,v_d\}$, $v_i=v_i'-u_i$, $i=1,\ldots,d$.
We may replace the variables $V_d'$ with $V_d$. In this way $M_d$ becomes a free $K[U_d,V_d]$-module.
To simplify the notation we shall omit $\imath$ and shall consider
$F_d$ as a subalgebra of $W_d$. Since the action of $K[U_d,V_d]$ on
$F_d'$ agrees with its action on $M_d$, we shall also think that
$F_d'$ is a $K[U_d,V_d]$-submodule of $M_d$.
If $\delta$ is a Weitzenb\"ock derivation of $F_d$, such that
\[
\delta(x_j)=\sum_{i=1}^d\alpha_{ij}x_i,\quad \alpha_{ij}\in K,\quad i,j=1,\ldots,d,
\]
we define an action of $\delta$ on $W_d$ assuming that
\[
\delta(a_j)=\sum_{i=1}^d\alpha_{ij}a_i,\quad \delta(y_j)=\sum_{i=1}^d\alpha_{ij}y_i,
\]
\[
\delta(u_j)=\sum_{i=1}^d\alpha_{ij}u_i,\quad \delta(v_j)=\sum_{i=1}^d\alpha_{ij}v_i,
\quad j=1,\ldots,d.
\]
Obviously, if $w\in F_d$, then $\imath(\delta(w))=\delta(\imath(w))$.
It is clear that the vector space $M_d^{\delta}$ of the constants of $\delta$ in the $K[U_d,V_d]$-module $M_d$
is a $K[U_d,V_d]^{\delta}$-module. The following lemma is a partial case of \cite[Proposition 3]{D2}.

\begin{lemma}\label{finite generation of module}
The vector space $M_d^{\delta}$ is a finitely generated $K[U_d,V_d]^{\delta}$-module.
\end{lemma}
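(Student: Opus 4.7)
The plan is to reduce the statement to the classical Weitzenböck theorem applied to a polynomial algebra in $3d$ variables, exploiting a natural grading.

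First, I would consider the commutative polynomial algebra $R=K[A_d,U_d,V_d]$ in $3d$ variables and extend $\delta$ to $R$ by letting it act on the $a_i$ exactly as it does in the definition of $\delta$ on $M_d$, namely $\delta(a_j)=\sum_i\alpha_{ij}a_i$. Since $\delta$ is linear on each of the three $d$-tuples and the matrix $(\alpha_{ij})$ is nilpotent, $\delta$ is a Weitzenböck derivation of $R$ whose Jordan form is three copies of $J(\delta)$. By the classical theorem of Weitzenböck, $R^{\delta}$ is a finitely generated $K$-algebra.

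Next, I would use the grading on $R$ by degree in the variables $A_d$ (with $u_i,v_j$ of degree $0$ and $a_i$ of degree $1$). This grading is preserved by $\delta$, so $R^{\delta}=\bigoplus_{n\geq 0}R_n^{\delta}$ is a graded subalgebra of $R$, and one may pick a finite set of homogeneous generators $r_1,\ldots,r_k$ of $R^{\delta}$ of degrees $d_1,\ldots,d_k$. Clearly $R_0=K[U_d,V_d]$ and $R_1=\bigoplus_{i=1}^d a_iK[U_d,V_d]$, and the identification of $M_d$ with $R_1$ (as $K[U_d,V_d]$-modules equipped with the $\delta$-action) is immediate because the module structure on $M_d$ matches the polynomial multiplication restricted to the degree-$1$ part. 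In particular $R_0^{\delta}=K[U_d,V_d]^{\delta}$ and $R_1^{\delta}=M_d^{\delta}$.

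Finally, any element of $R_1^{\delta}$ is a $K$-linear combination of monomials $r_{i_1}\cdots r_{i_s}$ with $d_{i_1}+\cdots+d_{i_s}=1$. Such a monomial must involve exactly one generator $r_{i_j}$ of degree $1$ and a product of generators of degree $0$; hence $R_1^{\delta}$ is generated, as a module over $R_0^{\delta}$, by the subset $\{r_i:d_i=1\}$ of the homogeneous generators of $R^{\delta}$. This set is finite, which yields the desired finite generation of $M_d^{\delta}$ over $K[U_d,V_d]^{\delta}$.

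I do not expect a serious obstacle here; the argument is essentially a grading trick on top of Weitzenböck's theorem. The only point that requires care is the identification of $M_d$ (equipped with its trivial multiplication and its $\delta$-action) with the degree-$1$ graded component $R_1$ of the auxiliary polynomial ring $R$, together with checking that $\delta$ commutes with this identification; once set up correctly, finite generation then drops out from the standard fact that each graded piece of a finitely generated graded algebra is finitely generated over the degree-$0$ piece.
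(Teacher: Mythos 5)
Your argument is correct. Note, however, that the paper does not actually prove this lemma at all: it is stated as ``a partial case of [D2, Proposition~3]'', i.e.\ the finite generation of $M_d^{\delta}$ over $K[U_d,V_d]^{\delta}$ is imported from Drensky's earlier work on invariants of unipotent transformations, where it is established for finitely generated modules in a more general setting. Your route is a direct, self-contained reduction to the classical Weitzenb\"ock theorem: you pass to the symmetric algebra $R=K[A_d,U_d,V_d]$ of the free module, observe that $\delta$ is still a nilpotent linear (hence Weitzenb\"ock) derivation there, and then read off $M_d^{\delta}=R_1^{\delta}$ as the degree-one piece of the finitely generated graded algebra $R^{\delta}$, generated over $R_0^{\delta}=K[U_d,V_d]^{\delta}$ by the finitely many homogeneous generators of $A_d$-degree one. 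All the steps check out: $\delta$ preserves the $A_d$-grading, so $R^{\delta}$ admits homogeneous generators, and a degree-one monomial in these generators is one degree-one generator times an element of $R_0^{\delta}$. The one structural feature your argument relies on --- and which the citation-based route does not need --- is that $M_d$ is a \emph{free} $K[U_d,V_d]$-module whose generating set $A_d$ spans a $\delta$-invariant subspace on which $\delta$ acts nilpotently; this holds here by construction, so the argument is valid. In exchange for being slightly less general, your proof makes the lemma self-contained modulo only the classical Weitzenb\"ock theorem, which is arguably an improvement in exposition.
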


The next theorem is a direct consequence of the lemma.

\begin{theorem}\label{theorem for finite generation}
Let $\delta$ be a Weitzenb\"ock derivation of the free metabelian associative algebra $F_d=F_d({\mathfrak M})$. Then
the vector space $(F_d')^{\delta}$ of the constants of $\delta$ in the commutator
ideal $F_d'$ of $F_d$ is a finitely generated $K[U_d,V_d]^{\delta}$-module.
\end{theorem}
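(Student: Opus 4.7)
The plan is to deduce the theorem from Lemma \ref{finite generation of module} by a standard Noetherian submodule argument, after identifying $(F_d')^{\delta}$ as a $K[U_d,V_d]^{\delta}$-submodule of $M_d^{\delta}$.

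First I would invoke Proposition \ref{embedding} together with the discussion that follows it: via $\imath$ we regard $F_d'$ as a $K[U_d,V_d]$-submodule of $M_d$, and the derivation $\delta$ on $W_d$ restricts to the given $\delta$ on $F_d$, i.e. $\imath\circ\delta=\delta\circ\imath$. Consequently
\[
(F_d')^{\delta}=F_d'\cap M_d^{\delta},
\]
and since the action of $K[U_d,V_d]^{\delta}$ preserves both $F_d'$ and $M_d^{\delta}$, the intersection is a $K[U_d,V_d]^{\delta}$-submodule of $M_d^{\delta}$.

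Next I would appeal to the classical theorem of Weitzenb\"ock applied to the polynomial algebra $K[U_d,V_d]$ in $2d$ variables, on which $\delta$ acts as a nilpotent linear operator of $KU_d\oplus KV_d$ (with two Jordan blocks of the same shape as $J(\delta)$). This gives that $K[U_d,V_d]^{\delta}$ is a finitely generated commutative $K$-algebra, hence a Noetherian ring by Hilbert's basis theorem.

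Finally I would combine these two observations with Lemma \ref{finite generation of module}: $M_d^{\delta}$ is a finitely generated module over the Noetherian ring $K[U_d,V_d]^{\delta}$, so every submodule of $M_d^{\delta}$ is finitely generated; in particular $(F_d')^{\delta}$ is. There is no genuine obstacle here — the only point worth checking carefully is that the $\delta$-action and the $K[U_d,V_d]$-action on $F_d'$ inherited from $M_d$ agree with the ones defined intrinsically on $F_d'$, which is exactly what was verified in the paragraphs preceding Lemma \ref{finite generation of module}.
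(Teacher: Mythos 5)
Your proposal is correct and follows exactly the same route as the paper's proof: identify $(F_d')^{\delta}$ as a $K[U_d,V_d]^{\delta}$-submodule of $M_d^{\delta}$, use Lemma \ref{finite generation of module} for finite generation of $M_d^{\delta}$, and invoke Weitzenb\"ock's theorem to make $K[U_d,V_d]^{\delta}$ finitely generated (hence Noetherian), so that every submodule is finitely generated. You merely spell out the Noetherian step more explicitly than the paper does.
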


\begin{proof}
By Lemma \ref{finite generation of module} the $K[U_d,V_d]^{\delta}$-module $M_d^{\delta}$ is finitely generated.
By the theorem of Weitzenb\"ock \cite{W} the algebra $K[U_d,V_d]^{\delta}$ is also finitely generated. Hence
all $K[U_d,V_d]^{\delta}$-submodules of $M_d^{\delta}$, including $(F_d')^{\delta}$, are also finitely generated.
\end{proof}

\section{Hilbert series}\label{section Hilbert series}

The free metabelian associative algebra $F_d=F_d({\mathfrak M})$
is a graded vector space. If $F_d^{(n)}$ is the homogeneous component of degree $n$
of $F_d$, then the Hilbert (or Poincar\'e) series of $F_d$ is the formal power series
\[
H(F_d,z)=\sum_{n\geq 0}\dim F_d^{(n)}z^n.
\]
The algebra $F_d$ is also multigraded, with a ${\mathbb Z}^d$-grading which counts the degree of each variable $x_j$ in the monomials
in $F_d$. If $F_d^{(n_1,\ldots,n_d)}$ is the multihomogeneous component of degree $(n_1,\ldots,n_d)$, then the corresponding Hilbert series
of $F_d$ is given by
\[
H(F_d,z_1,\ldots,z_d)=\sum_{n_j\geq 0}\dim F_d^{(n_1,\ldots,n_d)}z_1^{n_1}\cdots z_d^{n_d}.
\]
If $\delta$ is a Weitzenb\"ock derivation of $F_d$, then the algebra of constants $F_d^{\delta}$ is also graded and its Hilbert series is
\[
H(F_d^{\delta},z)=\sum_{n\geq 0}\dim (F_d^{\delta})^{(n)}z^n.
\]

The Hilbert series of $F_d({\mathfrak M})$ is well known. We shall give the proof for self-containedness.

\begin{lemma}\label{Hilbert series of free metabelian associative algebra}
The Hilbert series of the free metabelian associative algebra $F_d$ is
\[
H(F_d,z_1,\ldots,z_d)=\prod_{j=1}^d\frac{1}{1-z_j}\left(2+(z_1+\cdots+z_d-1)\prod_{j=1}^d\frac{1}{1-z_j}\right).
\]
\end{lemma}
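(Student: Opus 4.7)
The plan is to decompose $F_d = F_d/F_d' \oplus F_d'$ as a graded vector space. Since $F_d/F_d' \cong K[X_d]$, this summand contributes $\prod_{j=1}^d (1-z_j)^{-1}$ to the Hilbert series, and the task reduces to computing $H(F_d', z_1, \ldots, z_d)$.

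For the commutator ideal, I would use the embedding $\imath: F_d \hookrightarrow W_d$ of Proposition~\ref{embedding}, which identifies $\imath(F_d')$ with the $K[U_d,V_d]$-submodule of $M_d$ generated by the elements $a_iv_j - a_jv_i$ (for $i > j$), namely the images of the commutators $[x_i,x_j]$. The key observation is that this submodule coincides with the kernel of the $K[U_d,V_d]$-linear map $\varphi: M_d \to K[U_d,V_d]$ sending $a_i \mapsto v_i$: since $v_1,\ldots,v_d$ is a regular sequence in $K[U_d,V_d]$, the Koszul complex is exact, so the first syzygy module is generated precisely by the relations $a_iv_j - a_jv_i$. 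The image of $\varphi$ is the ideal $(v_1,\ldots,v_d)$, and the short exact sequence of graded $K[U_d,V_d]$-modules
\[
0 \to F_d' \to M_d \to (v_1, \ldots, v_d) \to 0,
\]
together with $H(M_d) = (z_1+\cdots+z_d)\prod_j (1-z_j)^{-2}$ and $H((v_1,\ldots,v_d)) = \prod_j(1-z_j)^{-2} - \prod_j(1-z_j)^{-1}$, yields
\[
H(F_d') = (z_1+\cdots+z_d - 1)\prod_j (1-z_j)^{-2} + \prod_j (1-z_j)^{-1}.
\]
Adding $H(F_d/F_d') = \prod_j (1-z_j)^{-1}$ and factoring out $\prod_j (1-z_j)^{-1}$ produces the claimed formula.

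The main technical point is the identification $\imath(F_d') = \ker\varphi$, i.e., that no syzygies beyond the Koszul ones arise among the images of the commutators; this is essentially where the metabelian identity is used. An alternative, fully elementary route avoids this step by computing $H(F_d')$ directly from the basis recalled earlier in the paper: summing $z_1^{a_1}\cdots z_d^{a_d}z_{j_1}\cdots z_{j_n}$ over all $a_i \geq 0$, $n \geq 2$, $j_1 > j_2$, $j_2 \leq j_3 \leq \cdots \leq j_n$, the sum factors as $\prod_j(1-z_j)^{-1}$ times $\sum_{k=1}^d z_k(z_{k+1}+\cdots+z_d)\prod_{l \geq k}(1-z_l)^{-1}$. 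The telescoping identity $z_k\prod_{l \geq k}(1-z_l)^{-1} = \prod_{l \geq k}(1-z_l)^{-1} - \prod_{l > k}(1-z_l)^{-1}$ collapses the inner sum to $(z_1+\cdots+z_d-1)\prod_j(1-z_j)^{-1} + 1$, arriving at the same expression for $H(F_d')$ and hence the same final formula.
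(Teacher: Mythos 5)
Your proposal is correct, and both of your routes differ genuinely from the paper's argument. The paper gets the formula in two quoted steps: it uses the general factorization $H(F_d({\mathfrak V}))=H(K[X_d])\,H(B_d({\mathfrak V}))$ through the subalgebra of proper polynomials, and then the known identity $B_d({\mathfrak M})=K\oplus L_d'/L_d''$ together with the standard Hilbert series of $L_d'/L_d''$. Your first route instead splits off $F_d/F_d'\cong K[X_d]$ and computes $H(F_d')$ inside the wreath product: since $\imath$ is injective (Proposition \ref{embedding}) and $\imath(F_d')$ is the $K[U_d,V_d]$-submodule of $M_d$ generated by the elements $a_iv_j-a_jv_i$, the exactness of the Koszul complex of the regular sequence $v_1,\ldots,v_d$ identifies this submodule with $\ker(a_i\mapsto v_i)$, and the resulting short exact sequence gives $H(F_d')=(z_1+\cdots+z_d-1)\prod_j(1-z_j)^{-2}+\prod_j(1-z_j)^{-1}$, which matches. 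What this buys is independence from the quoted formula for $H(L_d'/L_d'')$; the price is that you must invoke the embedding and the Koszul resolution, and strictly speaking the statement that the Koszul syzygies are the only ones rests on the regularity of the sequence rather than on the metabelian identity per se (the metabelian identity only guarantees that $F_d'$ is a $K[U_d,V_d]$-module generated by the $[x_i,x_j]$). Your second route is the most self-contained of the three: summing the generating function over the explicit basis $x_1^{a_1}\cdots x_d^{a_d}[x_{j_1},x_{j_2},\ldots,x_{j_n}]$ with $j_1>j_2\leq\cdots\leq j_n$ and telescoping with $z_kP_k=P_k-P_{k+1}$, $P_k=\prod_{l\geq k}(1-z_l)^{-1}$, does collapse the inner sum to $(z_1+\cdots+z_d-1)\prod_j(1-z_j)^{-1}+1$ (this is in effect a re-derivation of $H(L_d'/L_d'')$), and gives the same $H(F_d')$. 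All three computations agree with the stated formula.
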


\begin{proof}
The Hilbert series of a unitary relatively free algebra $F_d({\mathfrak V})$ can be expressed in terms of the Hilbert series of the
subalgebra $B_d({\mathfrak V})$ of the so called proper (or commutator) polynomials, see, e.g. \cite{D1}:
\[
H(F_d({\mathfrak V}),z_1,\ldots,z_d)=H(K[X_d],z_1,\ldots,z_d)H(B_d({\mathfrak V}),z_1,\ldots,z_d)
\]
\[
=\prod_{i=1}^d\frac{1}{1-z_i}H(B_d({\mathfrak V}),z_1,\ldots,z_d).
\]
For the metabelian variety $\mathfrak M$ the algebra of proper polynomials is spanned by the base field $K$
and the commutator ideal $L_d'/L_d''$ of the free metabelian Lie algebra.
The Hilbert series of $L_d'/L_d''$ is
\[
H(L_d'/L_d'',z_1,\ldots,z_d)=1+(z_1+\cdots+z_d-1)\prod_{i=1}^d\frac{1}{1-z_i}.
\]
Hence
\[
H(F_d({\mathfrak M}),z_1,\ldots,z_d)=\prod_{i=1}^d\frac{1}{1-z_i}(1+H(L_d'/L_d'',z_1,\ldots,z_d))
\]
which gives immediately the result.

The Hilbert series of $F_d({\mathfrak M})$ can be calculated directly using that $F_d({\mathfrak M})$
has a vector space basis consisting of
\[
X_d^a,\quad X_d^a[x_{j_1},x_{j_2},x_{j_3},\ldots,x_{j_n}],\quad X_d^a=x_1^{a_1}\cdots x_d^{a_d},
j_1>j_2\leq j_3\leq\cdots\leq j_n,n\geq 2.
\]
The products $X_d^a$ contribute to the factor $\displaystyle \prod_{i=1}^d\frac{1}{1-z_i}$. The expressions
$[x_{j_1},x_{j_2},x_{j_3},\ldots,x_{j_n}]$ for $n\geq 1$ and without the restriction $j_1>j_2$
behave as the pairs $(x_{j_1},x_{j_2}x_{j_3}\cdots x_{j_n})\in X_d\times [X_d]$, where $[X_d]$ is the semigroup
of all monomials in $K[X_d]$. Hence they give the series
\[
(z_1+\dots+z_d)\prod_{i=1}^d\frac{1}{1-z_i}.
\]
Now we have to subtract the series $\displaystyle \prod_{i=1}^d\frac{1}{1-z_i}-1$ which corresponds to the expressions
$[x_{j_1},x_{j_2},x_{j_3},\ldots,x_{j_n}]$ for $n\geq 1$ and $j_1\leq j_2$ (and behave as the monomials
$x_{j_1}x_{j_2}x_{j_3}\cdots x_{j_n}\in [X_d]$).
\end{proof}

Starting from the Hilbert series of the algebra $F_d=F_d({\mathfrak M})$ and the sizes of the Jordan cells of
the Weitzenb\"ock derivation $\delta$, there are many ways to compute the Hilbert series
of the algebra of constants $F_d^{\delta}$, see the comments in \cite{BBD} and \cite{DDF}.
We shall sketch the way used in \cite{DDF} in the Lie case.
It is an improvement (given in \cite{BBD}) of the method
of Elliott \cite{E} and its further development by McMahon \cite{Mc},
the so called partition analysis or $\Omega$-calculus.

Let $\delta=\delta(p_1,\ldots,p_s)$ be a Weitzenb\"ock derivation of $F_d$.
As we already mentioned, the algebra of constants $F_d^{\delta}$ coincides with the algebra of $UT_2(K)$-invariants
$F_d^{UT_2(K)}$, where $UT_2(K)\cong\{\exp(\alpha\delta)\mid\alpha\in K\}$. The idea is to extend the action of
$UT_2(K)$ on $F_d$ to an action of the general linear group $GL_2(K)$. Then $F_d$ is a direct sum of irreducible $GL_2(K)$-modules.
Each irreducible component contains a one-dimensional subspace of $UT_2(K)$-invariants
and the algebra $F_d^{\delta}=F_d^{UT_2(K)}$ is spanned by these subspaces.
In order to compute the Hilbert series of $F_d^{\delta}$ it is sufficient to find the number of the irreducible $GL_2(K)$-components
in each homogeneous subspace $F_d^{(n)}$ of $F_d$. The Hilbert series $H(F_d,z_1,\ldots,z_d)$ plays the role of the character
of $GL_d(K)$ in the following sense.
If $g$ belongs to the diagonal subgroup $D_d(K)$ of $GL_d(K)$ and has eigenvalues $\zeta_1,\ldots,\zeta_d$, then
\[
H(F_d,\zeta_1z,\ldots,\zeta_dz)=\sum_{n\geq 0}\text{tr}_{F_d^{(n)}}(g)z^n.
\]
The irreducible polynomial $GL_2(K)$-modules are indexed by partitions $\lambda=(\lambda_1,\lambda_2)$, $\lambda_1\geq\lambda_2$.
If $W=W(\lambda)$ is an irreducible component of $F_d$, then it is a direct sum of one-dimensional subspaces $W^{\alpha}=W^{(\alpha_1,\alpha_2)}$
such that $\alpha_1+\alpha_2=\lambda_1+\lambda_2$ and $\lambda_1\geq\alpha_1,\alpha_2\geq\lambda_2$. If $g\in D_2(K)$ has eigenvalues
$\xi_1,\xi_2$ and $w\in W^{\alpha}$, then $g(w)=\xi_1^{\alpha_1}\xi_2^{\alpha_2}w$. The unique
(up to a multiplicative constant) nonzero element $w\in W^{\lambda}$ spans the $UT_2(K)$-invariant subspace of $W$.
In our case $GL_2(K)$ acts polynomially on the vector space $KX_d$ and the vector space $KX_d$ has a basis
with the property that the diagonal subgroup $D_2(K)$ of $GL_2(K)$ acts as a group of $d\times d$ diagonal matrices.
Then the $GL_2(K)$-character of $F_d$ is a formal power series
\[
\chi_{F_d}(g,z)=\sum_{n\geq 0}\chi_{F_d^{(n)}}(g)z^n=\sum_{n\geq 0}\text{tr}_{F_d^{(n)}}(g)z^n,
\]
\[
g=\left(\begin{matrix}\xi_1&0\\
0&\xi_2\\
\end{matrix}\right)=\xi_1e_{11}+\xi_2e_{22}\in D_2(K)\subset GL_2(K).
\]
The characters of the irreducible $GL_2(K)$-modules $W(\lambda)$ are equal to the Schur functions $S_{\lambda}(t_1,t_2)$ and
for $g=\xi_1e_{11}+\xi_2e_{22}\in D_2(K)\subset GL_2(K)$
\[
\chi_{F_d}(g,z)=\sum_{n\geq 0}\sum_{(\lambda_1,\lambda_2)}m(\lambda_1,\lambda_2,n)S_{(\lambda_1,\lambda_2)}(\xi_1,\xi_2)z^n.
\]
Here the nonnegative integer $m(\lambda_1,\lambda_2,n)$ is the multiplicity of $W(\lambda_1,\lambda_2)$ in $F_d^{(n)}$.
Then the problem of computing of the Hilbert series of $F_d^{\delta}$ can be solved in two steps. First we have to find the Hilbert series
\[
H_{GL_2}(F_d,t_1,t_2,z)=\sum_{n\geq 0}\sum_{(\lambda_1,\lambda_2)}m(\lambda_1,\lambda_2,n)S_{(\lambda_1,\lambda_2)}(t_1,t_2)z^n
\]
which corresponds to the $GL_2(K)$-action on $F_d$. Then
\[
H(F_d^{\delta},z)=\sum_{n\geq 0}\sum_{(\lambda_1,\lambda_2)}m(\lambda_1,\lambda_2,n)z^n
\]
and we have to use $H_{GL_2}(F_d,t_1,t_2,z)$ and to evaluate the series $H(F_d^{\delta},z)$.

The first part of the problem is solved in the following way.
We assume that $X_d$ is a Jordan basis for the action of $\delta$ on the vector space $KX_d$.
If $Y_i=\{x_j,x_{j+1},\ldots,x_{j+p_i}\}$ is the part of the basis $X_d$ corresponding to the $i$-th Jordan cell of $\delta$,
we have an action of the linear automorphism $\exp(\delta)$ on the vector space $KY_i$ which is the same as its action on
the vector space of the binary forms of degree $p_i$. We extend this action to the action of $GL_2(K)$. In this way
$KX_d$ is a direct sum of the $GL_2(K)$-modules of the binary forms of degree $p_i$, $i=1,\ldots,s$. Then we extend this action
of $GL_2(K)$ diagonally on the whole $F_d$. The basis $\displaystyle X_d=\bigcup_{i=1}^sY_i$ consists of eigenvectors
of the diagonal subgroup $D_2(K)$ of $GL_2(K)$.
If $g=\xi_1e_{11}+\xi_2e_{22}\in D_2(K)$, then
\[
g(x_{j+k})=\xi_1^{p_i-k}\xi_2^kx_{j+k}, \quad k=0,1,\ldots,p_i.
\]
This defines a bigrading on $F_d$ assuming that the bidegree of $x_{j+k}$ is $(p_i-k,k)$.
We express the Hilbert series of $F_d$ as a bigraded vector space. For this purpose we replace in the Hilbert series
$H(F_d,z_1,\ldots,z_d)$ the variables $z_j,z_{j+1},\ldots,z_{j+p_i-1},z_{j+p_i}$ corresponding
to each set $Y_i=\{x_j,x_{j+1},\ldots,x_{j+p_i-1},x_{j+p_i}\}$
by $t_1^{p_i}z,t_1^{p_i-1}t_2z,\ldots,t_1t_2^{p_i-1}z,t_2^{p_i}z$, respectively, and obtain the Hilbert series
\[
H_{GL_2}(F_d,t_1,t_2,z)=H(F_d,t_1^{p_1}z,t_1^{p_1-1}t_2z,\ldots,t_2^{p_1}z,\ldots,t_1^{p_s}z,t_1^{p_s-1}t_2z,\ldots,t_2^{p_s}z)
\]
\[
=\sum_{n\geq 0}\sum_{(\lambda_1,\lambda_2)}m(\lambda_1,\lambda_2,n)S_{(\lambda_1,\lambda_2)}(t_1,t_2)z^n.
\]
The variable $z$ gives the total degree and $t_1,t_2$ count the bidegree
induced by the action of the diagonal subgroup of $GL_2(K)$.
The coefficient of $t_1^{n_1}t_2^{n_2}z^n$ in $H_{GL_2}(F_d,t_1,t_2,z)$ is equal to the dimension of the elements of $F_d$ which are
linear combinations of products of length $n$ in the variables $X_d$ and are of bidegree $(n_1,n_2)$.
Then the bigraded Hilbert series of the algebra $F_d^{\delta}$ is
\[
H_{GL_2}(F_d^{\delta},t_1,t_2,z)=\sum_{n\geq 0}\sum_{(\lambda_1,\lambda_2)}m(\lambda_1,\lambda_2,n)t_1^{\lambda_1}t_2^{\lambda_2}z^n
\]
and the Hilbert series of $F_d^{\delta}$ as a $\mathbb Z$-graded vector space is
\[
H(F_d^{\delta},z)=\sum_{n\geq 0}\dim(F_d^{\delta})^{(n)}z^n=H_{GL_2}(F_d^{\delta},1,1,z).
\]
In this way the second part of the problem reduces to the computing of the series $H_{GL_2}(F_d^{\delta},t_1,t_2,z)$.
The Hilbert series $H_{GL_2}(F_d,t_1,t_2,z)$ is a symmetric function with respect to the variables $t_1,t_2$.
If we already have a function $f(t_1,t_2,z)$ it is much easier to check
(even by hand) whether it is equal to the Hilbert series $H_{GL_2}(F_d^{\delta},t_1,t_2,z)$.
It is known that $f(t_1,t_2,z)=H_{GL_2}(F_d^{\delta},t_1,t_2,z)$ if and only if
\[
H_{GL_2}(F_d,t_1,t_2,z)=\frac{1}{t_1-t_2}(t_1f(t_1,t_2,z)-t_2f(t_2,t_1,z)).
\]
But in our situation, given $H_{GL_2}(F_d,t_1,t_2,z)$ only, we want to compute $H_{GL_2}(F_d^{\delta},t_1,t_2,z)$.
For this purpose we use the Elliott-McMahon method.
The Hilbert series $H_{GL_2}(F_d,t_1,t_2,z)$ is a so called nice rational symmetric function in $t_1,t_2$, i.e.,
its denominator is a product of binomials of the form $1-t_1^{m_1}t_2^{m_2}z^m$.
The Schur function $S_{(\lambda_1,\lambda_2)}(t_1,t_2)$ has the expression
\[
S_{(\lambda_1,\lambda_2)}(t_1,t_2)=(t_1t_2)^{\lambda_2}\frac{t_1^{\lambda_1-\lambda_2+1}-t_2^{\lambda_1-\lambda_2+1}}{t_1-t_2}.
\]
If
\[
H_{GL_2}(F_d,t_1,t_2,z)=\sum_{n\geq 0}\sum_{(\lambda_1,\lambda_2)}m(\lambda_1,\lambda_2,n)S_{(\lambda_1,\lambda_2)}(t_1,t_2)z^n
=\sum_{n\geq 0}h_n(t_1,t_2)z^n,
\]
then
\[
(t_1-t_2)H_{GL_2}(F_d,t_1,t_2,z)=\sum_{n\geq 0}\sum_{(\lambda_1,\lambda_2)}m(\lambda_1,\lambda_2,n)
(t_1^{\lambda_1+1}t_2^{\lambda_2}-t_1^{\lambda_2}t_2^{\lambda_1+1})z^n
\]
\[
=\sum_{n\geq 0}\sum_{n_1,n_2}a_{n_1,n_2}t_1^{n_1}t_2^{n_2}z^n.
\]
Hence we have to calculate ``half'' of the series $(t_1-t_2)H_{GL_2}(F_d,t_1,t_2,z)$, i.e., to evaluate the component
with $n_1>n_2$. Then
\[
H_{GL_2}(F_d^{\delta},t_1,t_2,z)=\frac{1}{t_1}\sum_{n\geq 0}\sum_{n_1>n_2}a_{n_1,n_2}t_1^{n_1}t_2^{n_2}z^n
\]
\[
=\sum_{n\geq 0}\sum_{(\lambda_1,\lambda_2)}m(\lambda_1,\lambda_2,n)
t_1^{\lambda_1}t_2^{\lambda_2}z^n.
\]
The idea of Elliott and McMahon is to replace $t_1$ and $t_2$ with $t_1u$ and $t_2/u$, respectively, and to obtain the Laurent series
\[
\left(t_1u-\frac{t_2}{u}\right)H_{GL_2}(F_d,t_1u,\frac{t_2}{u},z)=\sum_{k=-\infty}^{+\infty}\left(
\sum_{n\geq 0}\sum_{n_1-n_2=k)}a_{n_1,n_2}t_1^{n_1}t_2^{n_2}z^n\right)u^k
\]
\[
=\sum_{k=-\infty}^{+\infty}f_k(t_1,t_2,z)u^k=\sum_{k<0}f_k(t_1,t_2,z)u^k+\sum_{k>0}f_k(t_1,t_2,z)u^k.
\]
The Hilbert series we need is
\[
H_{GL_2}(F_d^{\delta},t_1,t_2,z)=\frac{1}{t_1}\sum_{k>0}f_k(t_1,t_2,z).
\]
For a nice rational function
\[
f(t_1,t_2,z)=\sum_{n\geq 0}\sum_{n_1,n_2}a_{n_1,n_2}t_1^{n_1}t_2^{n_2}z^n=p(t_1,t_2,z)\prod\frac{1}{1-t_1^{m_1}t_2^{m_2}z^m},
\]
$p(t_1,t_2,z)\in K[t_1,t_2,z]$, we can use two ways to calculate the component with $n_1>n_2$.
In both cases we replace $t_1$ and $t_2$, respectively, with $t_1u$ and $t_2/u$ in
$f(t_1,t_2,z)$.
The first method uses the approach of Elliott \cite{E}. The denominator of
$f(t_1u,t_2/u,z)$ is a product of three kinds of binomials:
\[
1-t_1^{m_1}t_2^{m_2}z^mu^k,\quad 1-t_1^{n_1}t_2^{n_2}z^nu^{-l},\quad k,l>0,\quad 1-t_1^{r_1}t_2^{r_2}z^r.
\]
If both kinds $1-t_1^{m_1}t_2^{m_2}z^mu^k$ and $1-t_1^{n_1}t_2^{n_2}z^nu^{-l}$ appear, we use the identity
\[
\frac{1}{(1-v_1)(1-v_2)}=\frac{1}{1-v_1v_2}\left(\frac{1}{1-v_1}+\frac{1}{1-v_2}-1\right)
\]
for $v_1=t_1^{m_1}t_2^{m_2}z^mu^k$ and $v_2=t_1^{m_1}t_2^{m_2}z^mu^{-l}$ and present $f(t_1u,t_2/u,z)$ as a sum of nice rational functions
in which the product $(1-t_1^{m_1}t_2^{m_2}z^mu^k)(1-t_1^{n_1}t_2^{n_2}z^nu^{-l})$ in the denominator
is replaced by one of the three kinds of products
\[
(1-t_1^{s_1}t_2^{s_2}u^st^{k-l})(1-t_1^{m_1}t_2^{m_2}z^mu^k), (1-t_1^{s_1}t_2^{s_2}u^st^{k-l})(1-t_1^{n_1}t_2^{n_2}z^nu^{-l}),
1-t_1^{s_1}t_2^{s_2}u^st^{k-l}.
\]
Continuing this process, we present $f(t_1u,t_2/u,z)$ as a sum of a Laurent polynomial in $u$
and nice rational functions which do not contain both factors
$1-t_1^{m_1}t_2^{m_2}z^mu^k$ and $1-t_1^{n_1}t_2^{n_2}z^nu^{-l}$. Then the contribution to $\sum_{k>0}f_k(t_1,t_2,z)u^k$
is due to the monomials of positive degree with respect to $u$ in the Laurent polynomial,
the nice rational functions with denominators not depending on factors of the form $1-t_1^{n_1}t_2^{n_2}z^nu^{-l}$ and the constant terms of
the nice functions depending on $1-t_1^{n_1}t_2^{n_2}z^nu^{-l}$. In this way we also prove that the Hilbert series
$H_{GL_2(K)}(F_d^{\delta},t_1,t_2,z)$ is a nice rational function. We refer to \cite{BBD} for details.
In our computations we have used another method. Considering $f(t_1u,t_2/u,z)$ as a rational function in the variable $u$, we present it
as a sum of a polynomial $p_0(t_1,t_2,z,u)$ in $u$ with rational in $t_1,t_2,z$ coefficients and of partial fractions of the form
\[
\frac{p_1(t_1,t_2,z)}{u^k},\quad \frac{p_2(t_1,t_2,z)}{(1-t_1^{n_1}t_2^{n_2}z^nu^k)^r},\quad \frac{p_3(t_1,t_2,z)}{(1-t_1^{n_1}t_2^{n_2}z^nu^{-k})^r},
\]
where $p_i(t_1,t_2,z)$ are rational functions.
Then $H_{GL_2(K)}(F_d^{\delta},t_1,t_2,z)$ is a sum of $p_0(t_1,t_2,z,u)$, the fractions $p_2(t_1,t_2,z)/(1-t_1^{n_1}t_2^{n_2}z^nu^k)^r$
and the functions $p_2(t_1,t_2,z)$, after replacing $u$ with 1.

Now we shall give the Hilbert series of the subalgebras of constants of Weitzenb\"ock derivations
of free metabelian associative algebras with small number of generators. In some of the cases
we give both Hilbert series, as graded and bigraded vector spaces, because we shall use
the results in the last section of our paper. We do not give results for derivations with a one-dimensional
Jordan cell because we shall handle them in the next section. We shall work out in detail the case $d=3$ and $\delta=\delta(2)$ only.
The computations in the other cases are similar.

\begin{example}\label{details for Hilbert series for d=3}
Let $d=3$ and $\delta=\delta(2)$, i.e., the matrix of $\delta$ (acting on $KX_3$) consists of one Jordan $3\times 3$ block.
The Hilbert series of the commutator ideal $F_3'$ of $F_3$ with respect to the canonical $GL_3(K)$-action is
\[
H(F_3',z_1,z_2,z_3)=\frac{1}{(1-z_1)(1-z_2)(1-z_3)}
\]
\[
\times\left(1+(z_1+z_2+z_3-1)\frac{1}{(1-z_1)(1-z_2)(1-z_3)}\right).
\]
With respect to the action of $D_2(K)$ the elements $x_1,x_2,x_3$ of the basis of $KX_3$ are homogeneous of degree
$(2,0),(1,1), (0,2)$, respectively. Hence we replace the variables $z_1,z_2,z_3$ with $t_1^2z,t_1t_2z,t_2^2z$, respectively.
In this way the Hilbert series of the $GL_2(K)$-module $F_3'$ is
\[
H_{GL_2(K)}(F_3',t_1,t_2,z)=\frac{1}{(1-t_1^2z)(1-t_1t_2z)(1-t_2^2z)}
\]
\[
\times\left(1+((t_1^2+t_1t_2+t_2^2)z-1)\frac{1}{(1-t_1^2z)(1-t_1t_2z)(1-t_2^2z)}\right).
\]
Now we consider the function
\[
f(t_1,t_2,z)=(t_1-t_2)H_{GL_2(K)}(F_3',t_1,t_2,z),
\]
replace there $t_1$ with $t_1u$ and $t_2$ by $t_2/u$ and express $f(t_1u,t_2/u,z)$ as a sum of partial fractions with respect to $u$:
\[
\frac{t_1^2t_2zu}{(1-t_1^2zu^2)^2(1-t_1t_2z)^2(1-t_1^2t_2^2z^2)}
+\frac{(t_1^2t_2^2z^2-t_1t_2z-1)t_1^2t_2zu}{(1-t_1^2zu^2)(1-t_1t_2z)^2(1+t_1t_2z)(1-t_1^2t_2^2z^2)}
\]
\[
+\frac{t_1^3t_2^4z^3u}{(u^2-t_2^2z)(1-t_1t_2z)^2(1+t_1t_2z)(1-t_1^2t_2^2z^2)}
-\frac{t_1t_2^4z^2u}{(u^2-t_2^2z)^2(1-t_1t_2z)^2(1-t_1^2t_2^2z^2)}.
\]
The expansion as a Laurent series in $u$
\[
\frac{u}{u^2-t_2^2z}=\frac{1}{u(1-t_2^2z/u)}=\sum_{k\geq 1}\frac{(t_2^2z)^{k-1}}{u^k}
\]
contains only negative powers of $u$. Hence we have to remove the third summand in $f(t_1u,t_2/u,z)$, and similarly for the fourth summand.
Hence the Hilbert series $H_{GL_2(K)}(F_3',t_1,t_2,z)$ is obtained replacing $u$ by 1 in the sum
\[
\frac{t_1^2t_2zu}{(1-t_1^2zu^2)^2(1-t_1t_2z)^2(1-t_1^2t_2^2z^2)}
+\frac{(t_1^2t_2^2z^2-t_1t_2z-1)t_1^2t_2zu}{(1-t_1^2zu^2)(1-t_1t_2z)^2(1+t_1t_2z)(1-t_1^2t_2^2z^2)}.
\]
After simple computations we obtain
\[
H_{GL_2(K)}((F_3')^{\delta},t_1,t_2,z)=\frac{t_1^3t_2z^2(1+(t_1+t_2)t_2z-t_1^2t_2^2z^2)}{(1-t_1^2z)^2(1-t_1t_2z)^3(1+t_1t_2z)^2},
\]
\[
H((F_3')^{\delta},z)=H_{GL_2(K)}(F_3',1,1,z)=\frac{z^2(1+2z-z^2)}{(1-z)^3(1-z^2)^2},
\]
Similarly, starting from the Hilbert series of $K[X_3]$
\[
H(K[X_3],z_1,z_2,z_3)=\frac{1}{(1-z_1)(1-z_2)(1-z_3)}
\]
we obtain
\[
H_{GL_2}(K[X_3]^{\delta},t_1,t_2,z)=\frac{1}{(1-t_1^2z)(1-t_1^2t_2^2z^2)},
\]
\[
H(K[X_3]^{\delta},z)=\frac{1}{(1-z)(1-z^2)},
\]
\[
H(F_3^{\delta},z)=H(K[X_3]^{\delta},z)+H((F_3')^{\delta},z).
\]
When $\delta$ acts on the linear component of $K[U_3,V_3]$, its matrix consists of two Jordan $3\times 3$ blocks.
The Hilbert series of $K[U_3,V_3]$ with respect to the canonical $GL_6(K)$-action is
\[
H(K[U_3,V_3],z_1,\ldots,z_6)=\prod_{i=1}^6\frac{1}{1-z_i}.
\]
With respect to the action of $D_2(K)$ the elements $\{u_1,v_1\},\{u_2,v_2\}, \{u_3,v_3\}$ are homogeneous of degree
$(2,0),(1,1), (0,2)$, respectively. Hence we replace the variables $z_1,z_4$ with $t_1^2z$, $z_2,z_5$ with $t_1t_2z$,
and $z_3,z_6$ with $t_2^2z$. Again, decomposing $(t_1u-t_2/u)H_{GL_2(K)}(K[U_3,V_3],t_1u,t_2/u,z)$ as a sum of partial fractions with respect to $u$
and taking only the fractions which contribute with positive degrees in the expansion as a Laurent series in $u$, we obtain
\[
H_{GL_2}(K[U_3,V_3]^{\delta},t_1,t_2,z)=\frac{1+t_1^3t_2z^2}{(1-t_1^2z)^2(1-t_1^2t_2^2z^2)^3},
\]
\[
H(K[U_3,V_3]^{\delta},z)=\frac{1+z^2}{(1-z)^2(1-z^2)^3}.
\]
Once found, it is easy to check that $H_{GL_2(K)}(F_3^{\delta},t_1,t_2,z)$ is really the Hilbert series of
$F_3^{\delta}$ as a bigraded vector space. It is sufficient to verify the equality
\[
H_{GL_2}(F_3,t_1,t_2,z)=\frac{1}{t_1-t_2}(t_1H_{GL_2}(F_3^{\delta},t_1,t_2,z)-t_2H_{GL_2}(F_3^{\delta},t_2,t_1,z))
\]
which can be seen by direct computations.
\end{example}

\begin{example}\label{Hilbert series for small d}
Let $\delta=\delta(p_1,\ldots,p_s)$ be the Weitzenb\"ock derivation acting on $KX_d$ with Jordan cells of size $p_1+1,\ldots,p_s+1$.
Let $\delta$ act on the vector spaces $KU_d$ and $KV_d$ in the same way as on $KX_d$.
Then the Hilbert series of the algebras of constants $F_d^{\delta}$ and $K[U_d,V_d]^{\delta}$
are:

\noindent $d=2$, $\delta=\delta(1)$:
\[
H_{GL_2}(F_2^{\delta},t_1,t_2,z)=\frac{1}{1-t_1z}+\frac{t_1t_2z^2}{(1-t_1z)^2(1-t_1t_2z^2)};
\]

\[
H(F_2^{\delta},z)=\frac{1}{1-z}+\frac{z^2}{(1-z)^2(1-z^2)};
\]

\[
H_{GL_2}(K[U_2,V_2]^{\delta},t_1,t_2,z)=\frac{1}{(1-t_1z)^2(1-t_1t_2z^2)};
\]

\[
H(K[U_2,V_2]^{\delta},z)=\frac{1}{(1-z)^2(1-z^2)};
\]

\noindent $d=3$, $\delta=\delta(2)$:
\[
H_{GL_2}(F_3^{\delta},t_1,t_2,z)=\frac{1}{(1-t_1^2z)(1-t_1^2t_2^2z^2)}
+\frac{t_1^3t_2(1+t_1t_2z)(1+t_1t_2z+t_2^2z-t_1^2t_2^2z^2)z^2}{(1-t_1^2z)^2(1-t_1^2t_2^2z^2)^3};
\]

\[
H(F_3^{\delta},z)=\frac{1}{(1-z)(1-z^2)}+\frac{z^2(1+z)(1+2z-z^2)}{(1-z)^2(1-z^2)^3};
\]

\[
H_{GL_2}(K[U_3,V_3]^{\delta},t_1,t_2,z)=\frac{1+t_1^3t_2z^2}{(1-t_1^2z)^2(1-t_1^2t_2^2z^2)^3};
\]

\[
H(K[U_3,V_3]^{\delta},z)=\frac{1+z^2}{(1-z)^2(1-z^2)^3};
\]

\noindent $d=4$, $\delta=\delta(3)$:
\[
H(F_4^{\delta},z)=\frac{1-z+z^2}{(1-z)^2(1-z^4)}+\frac{z^2p(z)}{(1-z)^4(1-z^2)^2(1-z^4)^3},
\]
\[
p(z)=2+z+3z^2+4z^3-6z^4-13z^5+13z^6-14z^7+2z^8+9z^9-5z^{10}+4z^{11}+2z^{12};
\]

\[
H(K[U_4,V_4]^{\delta},z)=\frac{1-2z+4z^2-3z^4-3z^8+4z^{10}-2z^{11}+z^{12}}{(1-z)^4(1-z^2)^2(1-z^4)^3};
\]

\noindent $d=4$, $\delta=\delta(1,1)$:
\[
H(F_4^{\delta},z)=\frac{1}{(1-z)^2(1-z^2)}+\frac{z^2(4+2z+z^2-22z^3+9z^4+10z^5-3z^6-2z^7+z^8)}{(1-z)^4(1-z^2)^5},
\]

\[
H(K[U_4,V_4]^{\delta},z)=\frac{1+z^2-4z^3+z^4+z^6}{(1-z)^4(1-z^2)^5};
\]

\noindent $d=5$, $\delta=\delta(4)$:
\[
H(F_5^{\delta},z)=\frac{1-z+z^2}{(1-z)^2(1-z^2)(1-z^3)}+\frac{z^2q(z)}{(1-z)^5(1-z^2)^3(1-z^3)^3},
\]
\[
q(z)=2+4z+5z^2-6z^3-15z^4+11z^5-10z^6+3z^7+5z^8+2z^9+z^{10}-5z^{11}+4z^{12}-z^{13};
\]

\[
H(K[U_5,V_5]^{\delta},z)=\frac{1-3z+6z^2-7z^3+3z^4+2z^5+z^6-9z^7+8z^8-3z^9+z^{10}}{(1-z)^5(1-z^2)^3(1-z^3)^3};
\]

\noindent $d=5$, $\delta=\delta(2,1)$:
\[
H(F_5^{\delta},z)=\frac{1+z^2}{(1-z)^2(1-z^2)(1-z^3)}+\frac{z^2r(z)}{(1-z)^5(1-z^2)^3(1-z^3)^3},
\]
\[
r(z)=4+8z+8z^2-9z^3-20z^4-5z^5-2z^6+9z^7+7z^8-2z^{11}+3z^{12}-z^{13};
\]

\[
H(K[U_5,V_5]^{\delta},z)=\frac{1+5z^2+z^3-4z^4-3z^5-3z^6-4z^7+z^8+5z^9+z^{11}}{(1-z)^5(1-z^2)^3(1-z^3)^3}.
\]
As in Example \ref{details for Hilbert series for d=3} we can check these results using the equality
\[
H_{GL_2}(F_d,t_1,t_2,z)=\frac{1}{t_1-t_2}(t_1H_{GL_2}(F_d^{\delta},t_1,t_2,z)-t_2H_{GL_2}(F_d^{\delta},t_2,t_1,z)).
\]
\end{example}

\section{Derivations with one-dimensional Jordan cell}

In this section we study the Weitzenb\"ock derivations $\delta$ of the form $\delta=\delta(p_1,\ldots,p_{s-1},0)$.
In other words, the Jordan normal form $J(\delta)$ of the linear operator $\delta$ acting on $KX_d$ has a $1\times 1$ cell.
Without loss of generality we may assume that $\delta$ acts on the vector spaces $KX_d$, $KU_d$, and $KV_d$
as a nilpotent linear operator such that
the vector subspaces  $KX_{d-1}$, $KU_{d-1}$, and $KV_{d-1}$ are $\delta$-invariant, and $\delta(x_d)=\delta(u_d)=\delta(v_d)=0$.
Now let $\omega(K[V_{d-1}])$ denote the augmentation ideal of $K[V_{d-1}]$, i.e., the ideal consisting all polynomials without constant term,
and let $K[U_{d-1},V_{d-1}]_{\omega}$ denote the tensor product of the $K$-algebras $K[U_{d-1}]$ and $\omega(K[V_{d-1}])$, i.e.,
\[
K[U_{d-1},V_{d-1}]_{\omega}=K[U_{d-1}]\otimes_K \omega(K[V_{d-1}]).
\]
This algebra can be considered as an ideal of the algebra $K[U_{d-1},V_{d-1}]$, and as a $K[U_{d-1},V_{d-1}]$-module generated by $V_{d-1}$.
By \cite[Proposition 3]{D2}
the algebra of constants $K[U_{d-1},V_{d-1}]_{\omega}^{\delta}$ of $K[U_{d-1},V_{d-1}]_{\omega}$
is a finitely generated $K[U_{d-1},V_{d-1}]^{\delta}$-module.

\begin{lemma}\label{reduction of Hilbert series}
The Hilbert series of $(F_d')^{\delta}$, $(F_{d-1}')^{\delta}$ and $K[U_{d-1},V_{d-1}]_{\omega}^{\delta}$ are related by
\[
H_{GL_2}((F_d')^{\delta},t_1,t_2,z)=\frac{1}{(1-z)^2}\left(H_{GL_2}((F_{d-1}')^{\delta},t_1,t_2,z)\right.
\]
\[
\left. +zH_{GL_2}(K[U_{d-1},V_{d-1}]_{\omega}^{\delta},t_1,t_2,z)\right),
\]
\[
H((F_d')^{\delta},z)=\frac{1}{(1-z)^2}(H((F_{d-1}')^{\delta},z)+zH(K[U_{d-1},V_{d-1}]_{\omega}^{\delta},z)).
\]
\end{lemma}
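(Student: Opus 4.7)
The plan is to exploit the Shmel'kin/Lewin embedding $F_d' \subseteq M_d = \bigoplus_{i=1}^d a_iK[U_d,V_d]$, under which $F_d'$ is the $K[U_d,V_d]$-submodule generated by the ``Koszul-like'' elements $a_iv_j - a_jv_i$ for $i<j$. Because $\delta(x_d)=0$ the same holds for $\delta(a_d),\delta(u_d),\delta(v_d)$, so the decomposition
\[
M_d = M_{d-1}^+ \oplus a_dK[U_d,V_d], \qquad M_{d-1}^+ := \bigoplus_{i=1}^{d-1} a_iK[U_d,V_d] \cong M_{d-1} \otimes_K K[u_d,v_d],
\]
is $\delta$-stable, and $a_d$ is a $\delta$-constant of bi-weight $(0,0)$ and degree one.

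The key step is to analyze the $\delta$-equivariant projection $\pi\colon F_d' \to a_dK[U_d,V_d]$. The generator $a_iv_j-a_jv_i$ is killed by $\pi$ when $i<j<d$ and maps to $-a_dv_i$ when $j=d$; hence
\[
\pi(F_d') = a_d\cdot(v_1,\ldots,v_{d-1})K[U_d,V_d] \cong a_d\cdot K[u_d,v_d]\otimes_K K[U_{d-1},V_{d-1}]_\omega.
\]
For the kernel I claim $\ker\pi = K[u_d,v_d]\otimes_K F_{d-1}'$. The inclusion $\supseteq$ is immediate. For $\subseteq$, take $w=\sum_{i<d}g_{id}(a_iv_d-a_dv_i)+\sum_{i<j<d}g_{ij}(a_iv_j-a_jv_i)\in\ker\pi$. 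Vanishing of the $a_d$-component forces $\sum_{i<d}g_{id}v_i=0$; since $v_1,\ldots,v_{d-1}$ is a regular sequence in $K[U_d,V_d]$ the Koszul complex is exact, so $(g_{id})_{i<d}=\sum_{p<q<d}h_{pq}(v_pe_q-v_qe_p)$ for some $h_{pq}\in K[U_d,V_d]$. Substituting, the $a_d$-terms cancel and $w$ rewrites as $v_d\sum_{p<q<d}h_{pq}(a_pv_q-a_qv_p)+\sum_{i<j<d}g_{ij}(a_iv_j-a_jv_i)$, a $K[U_d,V_d]$-combination of generators of $F_{d-1}'$. Expanding the coefficients in powers of $u_d,v_d$ places this expression in $K[u_d,v_d]\cdot F_{d-1}'$.

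Assembling these pieces yields the $\delta$-equivariant short exact sequence
\[
0 \to K[u_d,v_d]\otimes_K F_{d-1}' \to F_d' \to a_d\cdot K[u_d,v_d]\otimes_K K[U_{d-1},V_{d-1}]_\omega \to 0.
\]
The Weitzenb\"ock derivation $\delta$ integrates to a rational action of $SL_2(K)$, which is reductive in characteristic zero; hence the sequence splits in each homogeneous degree as a sequence of $SL_2(K)$-modules (Weyl's theorem), the functor of $\delta$-invariants is exact, and the Hilbert series of invariants add along the sequence. Since $K[u_d,v_d]$ is $\delta$-trivial with bigraded Hilbert series $1/(1-z)^2$ and the generator $a_d$ contributes the factor $z$, both displayed identities follow simultaneously.

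The main obstacle is the kernel identification in the second paragraph: translating the single scalar relation $\sum g_{id}v_i=0$ into a concrete rewriting of $w$ inside $K[u_d,v_d]\cdot F_{d-1}'$, which is where the Koszul structure on $v_1,\ldots,v_{d-1}$ enters decisively.
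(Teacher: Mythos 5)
Your proposal is correct, but it proves the lemma by a genuinely different route than the paper. The paper's proof is a pure generating-function manipulation: it starts from the closed formula $H(F_d')=H(K[X_d])H(L_d'/L_d'')$ of Lemma \ref{Hilbert series of free metabelian associative algebra}, imports the analogous recursion for $H_{GL_2}(L_d'/L_d'',t_1,t_2,z)$ from \cite[Lemma 4.1]{DDF}, deduces the recursion for the character of the \emph{full} module $F_d'$, and only then descends to the constants via the criterion $H_{GL_2}(W,t_1,t_2,z)=\frac{1}{t_1-t_2}\left(t_1f(t_1,t_2,z)-t_2f(t_2,t_1,z)\right)$. You instead realize the same recursion structurally, identifying $F_d'$ inside the wreath product with the Koszul syzygy module of $(v_1,\ldots,v_d)$ and producing the exact sequence $0\to F_{d-1}'\otimes_K K[u_d,v_d]\to F_d'\to a_d\,K[U_{d-1},V_{d-1}]_{\omega}\otimes_K K[u_d,v_d]\to 0$; your kernel computation via exactness of the Koszul complex of the regular sequence $v_1,\ldots,v_{d-1}$ is sound, and the image computation is immediate. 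Your approach is self-contained (it does not rely on the cited Lie-algebra recursion), and the splitting of your sequence is essentially the structural fact underlying the basis $D\cup E$ of Theorem \ref{reduction to d-1}, so it buys more than the lemma itself; the paper's route is shorter given the imported result. Two small points to tighten: (i) exactness of the functor of constants requires the sequence to be equivariant for the full $GL_2(K)$-action of Section \ref{section Hilbert series}, not merely $\delta$-stable --- this does hold here precisely because $a_d$, $u_d$, $v_d$ span trivial $GL_2(K)$-modules (the $1\times 1$ Jordan cell), but it should be said, since $\delta$ alone integrates only to a $UT_2(K)$-action, for which invariants are not exact in general; (ii) your projection $\pi\colon F_d'\to a_dK[U_d,V_d]$ collides notationally with the map $\pi\colon K[U_{d-1},V_{d-1}]_{\omega}\to F_d'$ that the paper defines in the opposite direction, so it should be renamed.
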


\begin{proof}
If $\delta$ acts on $KU_{d-1}$ and $KV_{d-1}$ as $\delta=\delta(p_1,\ldots,p_{s-1})$, then
it acts on $KU_d$ and $KV_d$ as $\delta(p_1,\ldots,p_{s-1},0)$.
By Lemma \ref{Hilbert series of free metabelian associative algebra} the Hilbert series of the commutator ideal of $F_d$ is
\[
H(F_d',z_1,\ldots,z_d)=\prod_{j=1}^d\frac{1}{1-z_j}H(L_d'/L_d'',z_1,\ldots,z_d),
\]
where $L_d'/L_d''$ is the commutator ideal of the free metabelian Lie algebra.
Following the procedure described in Section \ref{section Hilbert series} we replace its variables $z_j$ with
$t_1^{q_j}t_2^{r_j}z$, where the nonnegative integers $q_j,r_j$ depend on the size of the corresponding Jordan cell
and the position of the variable $x_j$ in the Jordan basis of $KX_d$. In particular, we have to replace the variable $z_d$ with $z$.
Hence
\[
H_{GL_2}(F_d',t_1,t_2,z)=\frac{1}{1-z}\prod_{j=1}^{d-1}\frac{1}{1-t_1^{q_j}t_2^{r_j}z}H_{GL_2}(L_d'/L_d'',t_1,t_2,z).
\]
We know from \cite[Lemma 4.1]{DDF} that
\[
H_{GL_2}(L_d'/L_d'',t_1,t_2,z)=\frac{1}{1-z}\left(H_{GL_2}(L_{d-1}'/L_{d-1}'',t_1,t_2,z)\right.
\]
\[
\left. +zH_{GL_2}(\omega(K[V_{d-1}]),t_1,t_2,z)\right).
\]
Thus
\[
H_{GL_2}(F_d',t_1,t_2,z)=\frac{1}{(1-z)^2}\left(H_{GL_2}(F_{d-1}',t_1,t_2,z)\right.
\]
\[
\left. +z(H_{GL_2}(K[U_{d-1}],t_1,t_2,z)H_{GL_2}(\omega(K[V_{d-1}]),t_1,t_2,z))\right)
\]
\[
=\frac{1}{(1-z)^2}(H_{GL_2}(F_{d-1}',t_1,t_2,z)+zH_{GL_2}(K[U_{d-1},V_{d-1}]_{\omega},t_1,t_2,z)).
\]
Using the fact that the formal power series $f(t_1,t_2,z)$ is the Hilbert series of the $\delta$-constants of
the $\mathbb Z$-graded $GL_2(K)$-module $W$ if and only if
\[
H_{GL_2}(W,t_1,t_2,z)=\frac{1}{t_1-t_2}(t_1f(t_1,t_2,z)-t_2f(t_2,t_1,z)),
\]
we obtain that the equality
\[
H_{GL_2}(F_d')=\frac{1}{(1-z)^2}(H_{GL_2}(F_{d-1}')+zH_{GL_2}(K[U_{d-1},V_{d-1}]_{\omega}))
\]
is equivalent to the desired equality
\[
H_{GL_2}((F_d')^{\delta},t_1,t_2,z)=\frac{1}{(1-z)^2}\left(H_{GL_2}((F_{d-1}')^{\delta},t_1,t_2,z)\right.
\]
\[
\left. +zH_{GL_2}(K[U_{d-1},V_{d-1}]_{\omega}^{\delta},t_1,t_2,z)\right).
\]
Clearly this implies that
\[
H((F_d')^{\delta},z)=\frac{1}{(1-z)^2}(H((F_{d-1}')^{\delta},z)+zH(K[U_{d-1},V_{d-1}]_{\omega}^{\delta},z)).
\]
\end{proof}

Recall that $F_d'$ is a $K[U_d,V_d]$-module.
For every monomial
\[
u_{j_1}\cdots u_{j_m}\in K[U_{d-1}], v_{j_1}\cdots v_{j_n}\in \omega(K[V_{d-1}]),m, n\geq 1,
\]
we define a $K$-linear map $\pi:K[U_{d-1},V_{d-1}]_{\omega}\to F_d'$ by
\[
\pi(v_{j_1}\cdots v_{j_n})=\sum_{k=1}^n[x_d,x_{j_k}]v_{j_1}\cdots v_{j_{k-1}}v_{j_{k+1}}\cdots v_{j_n},
\]
\[
\pi(u_{i_1}\cdots u_{i_m}v_{j_1}\cdots v_{j_n})=\pi(v_{j_1}\cdots v_{j_n})u_{i_1}\cdots u_{i_m},
\]
and
\[
\pi(u_{i_1}\cdots u_{i_m})=0.
\]

\begin{lemma}\label{pi commutes with delta}
{\rm (i) (\cite {DDF})} The map $\pi$ satisfies the equality
\[
\pi(v_1v_2)=\pi(v_1)v_2+\pi(v_2)v_1,\quad v_1,v_2\in \omega(K[V_{d-1}]).
\]
{\rm (ii)} The derivation $\delta$ and the map $\pi$ commute.
\end{lemma}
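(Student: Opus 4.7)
The plan is to prove (ii) by checking $\delta\pi = \pi\delta$ on monomials of $K[U_{d-1},V_{d-1}]_{\omega}$ and then extending by $K$-linearity. A monomial has the form $u_I v_J := u_{i_1}\cdots u_{i_m} v_{j_1}\cdots v_{j_n}$ with $n \geq 1$ (since we are in $K[U_{d-1},V_{d-1}]_{\omega}$). Since $\delta(x_d)=0$, the derivation preserves $KU_{d-1}$, $KV_{d-1}$ and the bigraded decomposition $K[U_{d-1}]\otimes\omega(K[V_{d-1}])$, so $\delta(u_Iv_J) = \delta(u_I)v_J + u_I\delta(v_J)$ with $\delta(v_J)\in \omega(K[V_{d-1}])$.

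Next I would reduce the mixed case to the pure-$V$ case. The identifications $fu_i = x_if$ and $fv_i = f\,\text{ad}\,x_i$ make $\delta$ satisfy the Leibniz rule $\delta(f\cdot h) = \delta(f)\cdot h + f\cdot\delta(h)$ for $f\in F_d'$ and $h\in K[U_d,V_d]$, simply because $\delta$ is a derivation of $W_d$. Using this together with the defining rule $\pi(u_I v_J) = \pi(v_J)u_I$, both sides of the desired identity on $u_Iv_J$ expand as
\[
\pi(\delta(u_Iv_J)) = \pi(v_J)\delta(u_I) + \pi(\delta(v_J))u_I,
\]
\[
\delta(\pi(u_Iv_J)) = \pi(v_J)\delta(u_I) + \delta(\pi(v_J))u_I,
\]
so the mixed case collapses to proving $\delta\pi(v_J) = \pi\delta(v_J)$ for monomials $v_J\in\omega(K[V_{d-1}])$. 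The pure $U$-case is trivial because $\pi$ annihilates $K[U_{d-1}]$ and $\delta$ preserves it.

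For the pure-$V$ case I would induct on $n=\deg v_J$. The base case $n=1$ is a direct computation: $\pi(v_j) = [x_d,x_j]$ and, using $\delta(x_d)=0$ together with $\delta(x_j)=\sum_{i\leq d-1}\alpha_{ij}x_i$, one has $\delta([x_d,x_j]) = [x_d,\delta(x_j)] = \sum_i\alpha_{ij}[x_d,x_i] = \pi(\delta(v_j))$. For the inductive step, write $v_J = v_{j_1}v_K$ with $v_K\in\omega(K[V_{d-1}])$ of smaller degree, apply part (i) and the Leibniz rules to obtain
\[
\delta(\pi(v_{j_1}v_K)) = \delta(\pi(v_{j_1}))v_K + \pi(v_{j_1})\delta(v_K) + \delta(\pi(v_K))v_{j_1} + \pi(v_K)\delta(v_{j_1}),
\]
\[
\pi(\delta(v_{j_1}v_K)) = \pi(\delta(v_{j_1}))v_K + \pi(v_K)\delta(v_{j_1}) + \pi(v_{j_1})\delta(v_K) + \pi(\delta(v_K))v_{j_1},
\]
where the second identity uses (i) applied termwise to the sums of monomials $\delta(v_{j_1})v_K$ and $v_{j_1}\delta(v_K)$. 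These agree by the base case applied to $v_{j_1}$ and the inductive hypothesis applied to $v_K$.

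The main subtlety is making sure part (i) can be invoked on $\delta(v_{j_1})v_K$ and $v_{j_1}\delta(v_K)$: these are sums of monomials rather than single monomials, so one needs that (i) is linear in each argument, which is immediate from the fact that it is a polynomial identity on generators extended bilinearly. Beyond this, everything is bookkeeping built on $\delta$'s derivation property both on $F_d$ and on the bimodule action, which is why all the cross terms cancel.
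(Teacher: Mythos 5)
Your reduction of the mixed case to the pure-$V$ case is exactly the paper's argument: using $\pi(u_Iv_J)=\pi(v_J)u_I$, the Leibniz rule for $\delta$ on the module action, and the fact that $\delta$ preserves the decomposition $K[U_{d-1}]\otimes\omega(K[V_{d-1}])$, both sides collapse to $\pi(\delta(v_J))u_I+\pi(v_J)\delta(u_I)$. The one place you diverge is the pure-$V$ case $\delta(\pi(v))=\pi(\delta(v))$ for $v\in\omega(K[V_{d-1}])$: the paper simply cites this from \cite{DDF} (it is the statement for the free metabelian Lie algebra), whereas you prove it by induction on the $V$-degree, with the base case $\delta([x_d,x_j])=[x_d,\delta(x_j)]=\pi(\delta(v_j))$ coming from $\delta(x_d)=0$ and the inductive step driven by the bilinear identity in part (i). Your version is therefore self-contained where the paper's is not, at the cost of invoking (i) on sums of monomials --- a point you correctly flag and justify by bilinearity. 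Both arguments are correct; yours buys independence from \cite{DDF} for part (ii), the paper's is shorter.
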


\begin{proof}
The case (i) was already proved in \cite{DDF}. It is sufficient to prove the case (ii) when
$w=uv\in K[U_{d-1},V_{d-1}]_{\omega}$,
and $u\in K[U_{d-1}]$ and $v\in \omega(K[V_{d-1}])$ are monomials. The map $\pi$ sends $v$ to
the commutator ideal $L_d'/L_d''$ of the free metabelian Lie algebra $L_d'/L_d''$. In \cite{DDF} we have seen that
$\delta(\pi(v))=\pi(\delta(v))$. Now
\[
\delta(\pi(w))=\delta(\pi(uv))=\delta(\pi(v)u)=\delta(\pi(v))u+\pi(v)\delta(u)=\pi(\delta(v))u+\pi(v)\delta(u)
\]
\[
=\pi(\delta(v)u)+\pi(v\delta(u))
=\pi(\delta(v)u+v\delta(u))=\pi(\delta(vu))=\pi(\delta(w))
\]
and this establishes (ii).
\end{proof}

The next theorem and its corollary are the main results of the section.

\begin{theorem}\label{reduction to d-1}
Let the Weitzenb\"ock derivation $\delta$ acting on the vector space $KX_d$
have a Jordan form with a $1\times 1$ cell. Let
$\delta$ act as a nilpotent linear operator
on $KX_{d-1}$ and $\delta(x_d)=0$, with a similar action on
$KU_d$ and $KV_d$.
Let $\{d_i\mid i\in I\}$, $\{g_j\in K[U_{d-1},V_{d-1}]_{\omega}\mid j\in J \}$
be, respectively, homogeneous vector space bases
of the $K[U_{d-1},V_{d-1}]^{\delta}$-module $(F_{d-1}')^{\delta}$ and of $K[U_{d-1},V_{d-1}]_{\omega}^{\delta}$
with respect to both $\mathbb Z$- and ${\mathbb Z}^2$-gradings.
Then $(F_d')^{\delta}$ has a basis
\[
\{d_iu_d^mv_d^n,\pi(g_j)u_d^mv_d^n \mid i\in I, j\in J,m,n\geq 0\}.
\]
\end{theorem}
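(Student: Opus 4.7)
The plan is to establish the theorem in three stages: verify the listed elements lie in $(F_d')^\delta$; show they are linearly independent in $F_d'$; and match Hilbert series via Lemma \ref{reduction of Hilbert series} to conclude that they span.

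For membership, I would use that $\delta(x_d)=0$ forces $\delta(u_d)=\delta(v_d)=0$, so multiplication by $u_d^m v_d^n$ preserves $\delta$-constants. Then $d_i\in(F_{d-1}')^\delta\subseteq(F_d')^\delta$ gives the first family, and Lemma \ref{pi commutes with delta}(ii) together with $\delta(g_j)=0$ yields $\delta(\pi(g_j))=\pi(\delta(g_j))=0$ for the second.

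The main work, and the main obstacle, is linear independence. I would split this into a separation argument between the two subfamilies and an independence argument within each. Using the standard basis of $F_d'$ recalled at the start of Section 2, each $d_i u_d^m v_d^n$ is a combination of basis monomials whose commutator part begins with some $x_{j_1}$ with $j_1<d$, since $d_i\in F_{d-1}'$ and the actions of $u_d$ and $v_d$ only introduce $x_d$ as a left multiplier or at the tail of the commutator (using the metabelian identity that permits permutation of the third and further slots, with $d$ the largest index). On the other hand, each $\pi(g_j) u_d^m v_d^n$ is supported on basis monomials whose commutator begins with $x_d$, by the very definition of $\pi$. These two sets of basis monomials of $F_d'$ are disjoint, so the two subfamilies span trivially intersecting subspaces. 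Within each subfamily, the $K[u_d,v_d]$-action is free, because via the embedding $F_d'\hookrightarrow M_d$ the commutator ideal $F_d'$ becomes a submodule of the free $K[U_d,V_d]$-module $M_d$; tracking the bidegree in $x_d$ then reduces linear independence for all $(m,n)$ to the case of fixed $(m,n)$, which follows from the linear independence of $\{d_i\}$ in $(F_{d-1}')^\delta$ (first family) and of $\{\pi(g_j)\}$ in $F_d'$ (second family). The latter rests on injectivity of $\pi$ on $K[U_{d-1},V_{d-1}]_\omega$, which is a short calculation: via the embedding $[x_d,x_j]=a_dv_j-a_jv_d$ in $M_d$, the $a_d$-component of $\pi\bigl(\sum c_{\vec a,\vec b}u^{\vec a}v^{\vec b}\bigr)$ equals $a_d\sum|\vec b|\,c_{\vec a,\vec b}\,v^{\vec b}u^{\vec a}$, whose vanishing in characteristic $0$ forces all $c_{\vec a,\vec b}=0$.

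Finally, since $\pi$ raises total degree by one (replacing a factor $v_{j_k}$ by a commutator $[x_d,x_{j_k}]$) and the action by $u_d^m v_d^n$ contributes a factor $1/(1-z)^2$, the span of the listed family has Hilbert series
\[
\frac{1}{(1-z)^2}\bigl(H((F_{d-1}')^\delta,z)+zH(K[U_{d-1},V_{d-1}]_\omega^\delta,z)\bigr),
\]
which equals $H((F_d')^\delta,z)$ by Lemma \ref{reduction of Hilbert series}. Since the span is contained in $(F_d')^\delta$ and has matching dimension in every graded component, it fills $(F_d')^\delta$, and the listed family is a $K$-basis.
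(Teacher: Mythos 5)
Your proposal is correct and follows essentially the same route as the paper's proof: verify membership, prove linear independence via the embedding of $F_d'$ into the free $K[U_d,V_d]$-module $M_d$ (where the coefficient of $a_d$ detects the $\pi(g_j)$-part, using $\operatorname{char}K=0$), and then match generating functions against Lemma \ref{reduction of Hilbert series}. The only cosmetic difference is that you first separate the two subfamilies by the leading entry of the commutator in the normal-form basis of $F_d'$ before treating each one, whereas the paper fixes the multidegrees in $\{u_d\}$ and $\{a_d,v_d\}$ and compares the $a_d$-coefficient directly; both arguments rest on the same structural facts.
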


\begin{proof}
The Hilbert series of $(F_{d-1}')^{\delta}$ and $K[U_{d-1},V_{d-1}]_{\omega}^{\delta}$ are equal, respectively,
to the generating functions of their bases. Hence
\[
H_{GL_2}((F_{d-1}')^{\delta},t_1,t_2,z)=\sum_{i\in I}t_1^{q_i}t_2^{r_i}z^{m_i},
\]
where $d_i$ is of bidegree $(q_i,r_i)$ and of total degree $m_i$. Since $u_d$ and $v_d$ are of bidegree $(0,0)$ and of total degree 1,
the generating function of the set $D=\{d_iu_d^mv_d^n\mid i\in I,n\geq 0\}$ is
\[
G(D,t_1,t_2,z)=\sum_{m,n\geq 0}\sum_{i\in I}t_1^{q_i}t_2^{r_i}z^{m_i}z^{m+n}=\frac{1}{(1-z)^2}H_{GL_2}((F_{d-1}')^{\delta},t_1,t_2,z).
\]
The map $\pi$ sends the monomials of $K[U_{d-1},V_{d-1}]_{\omega}^{\delta}$ to linear combinations of commutators with an extra variable $x_d$
in the beginning of each commutator. Hence, if the Hilbert series of $K[U_{d-1},V_{d-1}]_{\omega}^{\delta}$ is
\[
H_{GL_2}(K[U_{d-1},V_{d-1}]_{\omega}^{\delta},t_1,t_2,z)=\sum_{j\in J}t_1^{k_j}t_2^{l_j}z^{n_j},
\]
where the bidegree of $g_j$ is $(k_j,l_j)$ and its total degree is $n_j$, then
the generating function of the set $E=\{\pi(g_j)u_d^{m'}v_d^{n'}\mid j\in J,m',n'\geq 0\}$ is
\[
G(E,t_1,t_2,z)=\sum_{m,n\geq 0}\sum_{j\in J}t_1^{k_j}t_2^{l_j}z^{n_j+1}z^{m+n}
=\frac{z}{(1-z)^2}H_{GL_2}(K[U_{d-1},V_{d-1}]_{\omega}^{\delta},t_1,t_2,z).
\]
Hence, by Lemma \ref{reduction of Hilbert series}
\[
H_{GL_2}((F_d')^{\delta},t_1,t_2,z)=G(D,t_1,t_2,z)+G(E,t_1,t_2,z).
\]
Since both sets $D$ and $E$ are contained in $(F_d')^{\delta}$, we shall conclude that
$D\cup E$ is a basis of $(F_d')^{\delta}$ if we show that the elements of $D\cup E$
are linearly independent. For this purpose it is more convenient to work in the wreath product
$W_d$.

The elements $d_i$ belong to $F_{d-1}'\subset W_{d-1}$ and hence are of the form
\[
d_i=\sum_{k=1}^{d-1}a_kw_{ki}(U_{d-1},V_{d-1}),\quad w_{ki}(U_{d-1},V_{d-1})\in K[U_{d-1},V_{d-1}]_{\omega}.
\]
Therefore
\[
d_iu_d^mv_d^n=\sum_{k=1}^{d-1}a_kw_{ki}(U_{d-1},V_{d-1})u_d^mv_d^n.
\]
On the other hand, the elements $\pi(g_j)u_d^mv_d^n$ are of the form
\[
\pi(g_j)u_d^{m'}v_d^{n'}=\sum_{k=1}^{d-1}[x_d,x_k]h_{kj}(U_{d-1},V_{d-1})u_d^{m'}v_d^{n'}
\]
\[
=\sum_{k=1}^{d-1}(a_dv_k-a_kv_d)h_{kj}(U_{d-1},V_{d-1})u_d^{m'}v_d^{n'},
\]
where $h_{kj}(U_{d-1},V_{d-1})\in K[U_{d-1},V_{d-1}]$. Let
\[
w=\sum\xi_{imn}d_iu_d^mv_d^n+\sum\eta_{jm'n'}\pi(g_j)u_d^{m'}v_d^{n'}=0
\]
for some $\xi_{imn},\eta_{jm'n'}\in K$.
Clearly, we may assume that the elements $d_iu_d^mv_d^n$, $\pi(g_j)u_d^{m'}v_d^{n'}$ are homogeneous
with respect to each of the groups of variables $U_{d-1}$, $A_{d-1}\cup V_{d-1}$, $\{u_d\}$, and $\{a_d,v_d\}$.
It follows from the definition of $\pi$ that
\[
\pi(v_{j_1}\cdots v_{j_q})=\sum_{k=1}^q[x_d,x_{j_k}]v_{j_1}\cdots v_{j_{k-1}}v_{j_{k+1}}\cdots v_{j_q}
\]
\[
=\sum_{k=1}^q(a_dv_{j_k}-a_{j_k}v_d)v_{j_1}\cdots v_{j_{k-1}}v_{j_{k+1}}\cdots v_{j_q}
\]
\[
=qa_dv_{j_1}\cdots v_{j_q}-\sum_{k=1}^qa_{j_k}v_{j_1}\cdots v_{j_{k-1}}v_{j_{k+1}}\cdots v_{j_q}v_d.
\]
Hence, if $g_j(U_{d-1},V_{d-1})$ is homogeneous of degree $q$ with respect to $V_{d-1}$, then
\[
\pi(g_j)u_d^{m'}v_d^{n'}=\left(qa_dg_j(U_{d-1},V_{d-1})-\sum_{k=1}^{d-1}a_kh_{kj}(U_{d-1},V_{d-1})v_d\right)u_d^{m'}v_d^{n'}.
\]
In the linear dependence between the elements $d_iu_d^mv_d^n$ and $\pi(g_j)u_d^{m'}v_d^{n'}$ we replace
these elements with their expressions in $W_d$ and obtain
\[
w=\sum\xi_{imn}\sum_{k=1}^{d-1}a_kw_{ki}(U_{d-1},V_{d-1})u_d^mv_d^n
\]
\[
+\sum\eta_{jm'n'}\left(qa_dg_j(U_{d-1},V_{d-1})-\sum_{k=1}^{d-1}a_kh_{kj}(U_{d-1},V_{d-1})v_d\right)u_d^{m'}v_d^{n'}=0.
\]
Hence $m'=m$, $n'=n-1$, and, canceling $u_d^m$ and $v_d^{n-1}$, we obtain
\[
w=qa_d\sum\eta_{j,m,n-1}g_j(U_{d-1},V_{d-1})-\sum\eta_{j,m,n-1}\sum_{k=1}^{d-1}a_kh_{kj}(U_{d-1},V_{d-1})v_d
\]
\[
+\sum\xi_{imn}\sum_{k=1}^{d-1}a_kw_{ki}(U_{d-1},V_{d-1})v_d=0.
\]
Since the elements $g_j$ are linearly independent in $K[U_{d-1},V_{d-1}]_{\omega}^{\delta}$, comparing the coefficient of
$a_d$ in $w$ we conclude that $\eta_{j,m,n-1}=0$. Then, using that the elements $d_i$ are linearly independent in
$(F_{d-1}')^{\delta}$, we derive that $\xi_{imn}=0$. Hence the set $D\cup E$ is a basis of $(F_d')^{\delta}$.
\end{proof}

Every polynomial $f(U_{d-1},V_{d-1})\in K[U_{d-1},V_{d-1}]^{\delta}$ can be written in the form
\[
f(U_{d-1},V_{d-1})=f'(U_{d-1})+f''(U_{d-1},V_{d-1}),
\]
where $f'(U_{d-1})$ does not depend on $V_{d-1}$ and every monomial of $f''(U_{d-1},V_{d-1})$ contains a variable from $V_{d-1}$.
Since $\delta(f'(U_{d-1}))$ and $\delta(f''(U_{d-1},V_{d-1}))$ preserve these properties,
both $f'(U_{d-1})$ and $f''(U_{d-1},V_{d-1})$ belong to $K[U_{d-1},V_{d-1}]^{\delta}$.
Hence we may fix a system of generators of the algebra $K[U_{d-1},V_{d-1}]^{\delta}$
\[
\{e_1(U_{d-1}),\ldots,e_k(U_{d-1}),f_1(U_{d-1},V_{d-1}),\ldots,f_l(U_{d-1},V_{d-1})\},
\]
where $e_i(U_{d-1})\in K[U_{d-1}]^{\delta}$ and all monomials of $f_j(U_{d-1},V_{d-1})$ depend on $V_{d-1}$, i.e.,
$f_j(U_{d-1},V_{d-1})$ belongs to $K[U_{d-1},V_{d-1}]_{\omega}^{\delta}$. Every element of $K[U_{d-1},V_{d-1}]_{\omega}^{\delta}$
is a linear combination of products $e_1^{a_1}\cdots e_k^{a_k}f_1^{b_1}\cdots f_l^{b_l}$, where $b_1+\cdots+b_l>0$.
Hence the set
\[
\{f_1(U_{d-1},V_{d-1}),\ldots,f_l(U_{d-1},V_{d-1})\}
\]
generates the $K[U_{d-1},V_{d-1}]^{\delta}$-module $K[U_{d-1},V_{d-1}]_{\omega}^{\delta}$.
We also fix a set $\{c_1,\ldots,c_m\}$ of generators of the  $K[U_{d-1},V_{d-1}]^{\delta}$-module
$(F_{d-1}')^{\delta}$.
Without loss of generality we may assume that the polynomials in these systems are homogeneous.
Our purpose is to find a generating set of the $K[U_d,V_d]^{\delta}$-module $(F_d')^{\delta}$.

\begin{corollary}\label{obtaining generators from smaller sets}
Let $X_d$ be a Jordan basis of the Weitzenb\"ock derivation $\delta$ and let $\delta$ have a $1\times 1$ Jordan cell
corresponding to $x_d$. Let
$\delta$ act in the same way on $KU_d$ and $KV_d$.
Let $\{c_1,\ldots,c_m\}$ be a homogeneous generating set
of the $K[U_{d-1},V_{d-1}]^{\delta}$-module $(F_{d-1}')^{\delta}$.
Let $\{e_1,\ldots,e_k,f_1,\ldots,f_l\}$ be a generating set of the algebra $K[U_{d-1},V_{d-1}]^{\delta}$,
where $e_1,\ldots,e_k$ do not depend on $V_{d-1}$ and every monomial of $f_1,\ldots,f_l$ depends on $V_{d-1}$.
Then the $K[U_d,V_d]^{\delta}$-module $(F_d')^{\delta}$ is generated by the set
\[
\{c_1,\ldots,c_m\}\cup\{\pi(f_1),\ldots,\pi(f_l)\}.
\]
\end{corollary}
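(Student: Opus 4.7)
The plan is to combine the explicit $K$-basis of $(F_d')^{\delta}$ given by Theorem \ref{reduction to d-1} with a derivation-type identity for the map $\pi$. Fix homogeneous $K$-bases $\{d_i\mid i\in I\}$ of $(F_{d-1}')^{\delta}$ and $\{g_j\mid j\in J\}$ of $K[U_{d-1},V_{d-1}]_{\omega}^{\delta}$. By Theorem \ref{reduction to d-1}, the family
\[
\{d_iu_d^mv_d^n,\ \pi(g_j)u_d^mv_d^n\mid i\in I,\ j\in J,\ m,n\geq 0\}
\]
is a $K$-basis of $(F_d')^{\delta}$, so it suffices to show that each of these basis vectors lies in the $K[U_d,V_d]^{\delta}$-submodule $N\subseteq(F_d')^{\delta}$ generated by $\{c_1,\ldots,c_m\}\cup\{\pi(f_1),\ldots,\pi(f_l)\}$. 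Throughout I shall use that $u_d,v_d\in K[U_d,V_d]^{\delta}$ (since $\delta(u_d)=\delta(v_d)=0$) and that $K[U_{d-1},V_{d-1}]^{\delta}\subseteq K[U_d,V_d]^{\delta}$.

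For the first family the argument is immediate: because $\{c_1,\ldots,c_m\}$ generates $(F_{d-1}')^{\delta}$ as a $K[U_{d-1},V_{d-1}]^{\delta}$-module and $d_i\in(F_{d-1}')^{\delta}$, I can write $d_i=\sum_{t}\alpha_{it}c_t$ with $\alpha_{it}\in K[U_{d-1},V_{d-1}]^{\delta}$. Then $d_iu_d^mv_d^n=\sum_{t}(\alpha_{it}u_d^mv_d^n)c_t\in N$, since the scalars $\alpha_{it}u_d^mv_d^n$ lie in $K[U_d,V_d]^{\delta}$.

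For the second family the key ingredient is the extension of Lemma \ref{pi commutes with delta}(i) to all of $K[U_{d-1},V_{d-1}]_{\omega}$, namely the identity
\[
\pi(w_1w_2)=\pi(w_1)w_2+\pi(w_2)w_1,\quad w_1,w_2\in K[U_{d-1},V_{d-1}]_{\omega}.
\]
This follows from the original statement by writing monomials of $K[U_{d-1},V_{d-1}]_{\omega}$ in the form $uv$ with $u\in K[U_{d-1}]$ and $v\in\omega(K[V_{d-1}])$, applying the rule $\pi(uv)=\pi(v)u$, and using the commutativity of the $U$- and $V$-actions on $F_d'$. Now every $g_j$ is a $K$-linear combination of monomials of the form $E\cdot f_1^{b_1}\cdots f_l^{b_l}$, where $E=e_1^{a_1}\cdots e_k^{a_k}\in K[U_{d-1}]^{\delta}$ and $b_1+\cdots+b_l\geq 1$. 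Since $\pi$ vanishes on $K[U_{d-1}]$, iterating the derivation identity yields
\[
\pi(E\cdot f_1^{b_1}\cdots f_l^{b_l})=E\sum_{s=1}^{l}b_s f_1^{b_1}\cdots f_s^{b_s-1}\cdots f_l^{b_l}\pi(f_s),
\]
which exhibits $\pi$ of each such monomial as a $K[U_{d-1},V_{d-1}]^{\delta}$-combination of $\pi(f_1),\ldots,\pi(f_l)$. Summing over the monomial expansion of $g_j$ gives $\pi(g_j)\in N$, and multiplying by $u_d^mv_d^n\in K[U_d,V_d]^{\delta}$ handles $\pi(g_j)u_d^mv_d^n$.

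The only genuinely nontrivial step is the Leibniz-type identity for $\pi$ on $K[U_{d-1},V_{d-1}]_{\omega}$; the rest is bookkeeping that combines Theorem \ref{reduction to d-1} with the decomposition $K[U_d,V_d]^{\delta}=K[U_{d-1},V_{d-1}]^{\delta}[u_d,v_d]$ (which is a consequence of the $\delta$-stability of $K[U_{d-1},V_{d-1}]$ and the fact that $u_d,v_d$ are $\delta$-constants). Thus the main obstacle is verifying this extended Leibniz identity, after which the corollary follows directly from Theorem \ref{reduction to d-1}.
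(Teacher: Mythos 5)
Your proof is correct and follows essentially the same route as the paper's: both arguments reduce the claim to the explicit basis $\{d_iu_d^mv_d^n,\pi(g_j)u_d^mv_d^n\}$ from Theorem \ref{reduction to d-1}, express the $d_i$ through the $c_t$ over $K[U_{d-1},V_{d-1}]^{\delta}$, and use the Leibniz property of $\pi$ together with $K[U_d,V_d]^{\delta}=(K[U_{d-1},V_{d-1}]^{\delta})[u_d,v_d]$ to reach the $\pi(f_s)$. Your explicit verification that the Leibniz identity extends from $\omega(K[V_{d-1}])$ to all of $K[U_{d-1},V_{d-1}]_{\omega}$, and your inclusion of the multiplicities $b_s$ in $\pi(E\,f_1^{b_1}\cdots f_l^{b_l})=E\sum_s b_s f_1^{b_1}\cdots f_s^{b_s-1}\cdots f_l^{b_l}\pi(f_s)$, are in fact slightly more careful than the paper's displayed formula, which omits these coefficients (harmlessly for the generation statement).
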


\begin{proof}
Clearly, the $K[U_{d-1},V_{d-1}]^{\delta}$-module $(F_{d-1}')^{\delta}$ is spanned by the elements
$c_ie_1^{a_1}\cdots e_k^{a_k}f_1^{b_1}\cdots f_l^{b_l}$.
In particular, in this way we obtain all elements $d_i$ from the basis of
the vector space $(F_{d-1}')^{\delta}$.
Since $u_d,v_d\in K[U_d,V_d]^{\delta}$, we obtain also all elements $d_iu_d^mv_d^n$.
Similarly, the $K[U_{d-1},V_{d-1}]^{\delta}$-module $K[U_{d-1},V_{d-1}]_{\omega}^{\delta}$ is spanned by
the products $e_1^{a_1}\cdots e_k^{a_k}f_1^{b_1}\cdots f_l^{b_l}$ which satisfy the property $b_1+\cdots+b_l>0$.
By Lemma \ref{pi commutes with delta}
\[
\pi(e_1^{a_1}\cdots e_k^{a_k}f_1^{b_1}\cdots f_l^{b_l})
=\sum_{j=1}^l\pi(f_j)e_1^{a_1}\cdots e_k^{a_k}f_1^{b_1}\cdots f_j^{b_j-1}\cdots f_l^{b_l}.
\]
Since $K[U_d,V_d]^{\delta}=(K[U_{d-1},V_{d-1}]^{\delta})[u_d,v_d]$, the elements $\pi(g_j)u_d^mv_d^n$ belong
to the $K[U_d,V_d]^{\delta}$-module generated by $\pi(f_1),\ldots,\pi(f_l)$.
Hence
$\{c_1,\ldots,c_m\}\cup\{\pi(f_1),\ldots,\pi(f_l)\}$ generate
the $K[U_d,V_d]^{\delta}$-module $(F_d')^{\delta}$.
\end{proof}

\section{Generating sets for small number of generators}

In this section we shall find the generators of the $K[U_d,V_d]^{\delta}$-module $(F_d')^{\delta}$
for $d\leq 3$.

\begin{example}\label{block of 2}
Let $d=2$, $\delta=\delta(1)$, and let $\delta(x_1)=0$, $\delta(x_2)=x_1$.
It is well known, see e.g., \cite{N}, that
$K[U_2,V_2]^{\delta}$ is generated by the algebraically independent polynomials
$u_1$, $v_1$ and $u_1v_2-u_2 v_1$. Hence
\[
H_{GL_2}(K[U_2,V_2]^{\delta},t_1,t_2,z)=\frac{1}{(1-t_1z)^2(1-t_1t_2z^2)},
\]
as indicated in Example \ref{Hilbert series for small d}.
The same example gives that
\[
H_{GL_2}((F_2')^{\delta},t_1,t_2,z)=\frac{t_1t_2z^2}{(1-t_1z)^2(1-t_1t_2z^2)}.
\]
It is easy to see that the element $[x_2,x_1]$ is
belong to $(F_2')^{\delta}$ and is of bidegree $(1,1)$. Since the Hilbert series
of $K[U_2,V_2]^{\delta}$-submodule of $(F_2')^{\delta}$ generated by $[x_2,x_1]$ is equal to the Hilbert series
of the whole module $(F_2')^{\delta}$, we conclude that $(F_2')^{\delta}$ is a free cyclic $K[U_2,V_2]^{\delta}$-module generated by $[x_2,x_1]$.
As a vector space $(F_2)^{\delta}$ is spanned by the elements $x_1^k$, $[x_2,x_1]u_1^lv_1^m(u_1v_2-u_2v_1)^n$, $k,l,m,n \geq 0$.
Recall that the action of $u_1v_2-u_2v_1$ on the commutator ideal $F_2'$ is given by
\[
w(u_1v_2-u_2v_1)=x_1[w,x_2]-x_2[w,x_1],\quad w\in F_2'.
\]
Knowing the basis of $F_2^{\delta}$ it is easy to derive that
the algebra $(F_2)^{\delta}$ is generated by the infinite set
\[
\{x_1, [x_1,x_2](u_1v_2-u_2v_1)^n\mid n\geq 0\}.
\]
\end{example}

\begin{example}\label{blocks of 2 and 1}
Let $d=3$ and let the Jordan normal form of $\delta$ have two cells, of size $2\times 2$ and $1\times 1$, respectively.
Hence $\delta=\delta(1,0)$ in our notation and we may apply Corollary \ref{obtaining generators from smaller sets}.
By Example \ref{block of 2} for $d=2$ and $\delta=\delta(1)$,
the algebra $K[U_2,V_2]^{\delta}$ is generated by $u_1$, $v_1$ and $u_1v_2-u_2v_1$. The $K[U_2,V_2]^{\delta}$-module
$(F_2')^{\delta}$ is generated by the single element $[x_2,x_1]$. In the notation of Corollary \ref{obtaining generators from smaller sets},
\[
e_1=u_1,\quad f_1=v_1,\quad f_2=u_1v_2-u_2v_1,\quad c_1=[x_2,x_1].
\]
Hence the $K[U_3,V_3]^{\delta}$-module $(F_3')^{\delta}$ is generated by $c_1=[x_2,x_1]$, $\pi(f_1)=[x_3,x_1]$, and
\[
\pi(f_2)=\pi(u_1v_2-u_2v_1)=\pi(v_2)u_1-\pi(v_1)u_2=x_1[x_3,x_2]-x_2[x_3,x_1].
\]
These three elements satisfy the relation
\[
c_1u_1v_3-\pi(f_1)(u_1v_2-u_2v_1)+\pi(f_2)v_1=0,
\]
i.e.,
\[
x_1[[x_2,x_1],x_3]-(x_1[[x_3,x_1],x_2]-x_2[[x_3,x_1],x_1])+[x_1[x_3,x_2]-x_2[x_3,x_1],x_1]=0.
\]
This result agrees with the Hilbert series
\[
H_{GL_2}((F_3')^{\delta(1,0)},t_1,t_2,z)=\frac{t_1z^2+t_1t_2z^2+t_1t_2z^3-t_1^2t_2z^4}{(1-z)^2(1-t_1z)^2(1-t_1t_2z^2)}
\]
of $K[U_3,V_3]^{\delta}$-module $(F_3')^{\delta}$. (The summands $t_1z^2$, $t_1t_2z^2$, $t_1t_2z^3$ in the nominator
correspond to the three generators $c_1$, $\pi(f_1)$, $\pi(f_2)$, and $-t_1^2t_2z^4$ corresponds to the relation.)
\end{example}

\begin{example}\label{block of 3}
Let $d=3$, $\delta=\delta(2)$, and let $\delta(x_1)=0$, $\delta(x_2)=x_1$, $\delta(x_3)=x_2$.
The generators of $K[U_3,V_3]^{\delta}$ are given in \cite{N}. In our notation they are
\[
f_1=u_1,\quad f_2=v_1,\quad f_3=u_2^2-2u_1u_3,\quad f_4=v_2^2-2v_1v_3,
\]
\[
f_5=u_1v_3-u_2v_2+u_3v_1,\quad f_6=u_1v_2-u_2v_1.
\]
There are two more generators in \cite{N},
\[
f_7=2u_1^2v_3-2u_1u_2v_2+u_2^2v_1,\quad f_8=u_1v_2^2-2v_1u_2v_2+2u_3v_1^2,
\]
but they can be expressed by the other ones:
\[
f_7=f_2f_3+2f_1f_5,\quad f_8=f_1f_4+2f_2f_5.
\]
The generators $f_1,f_2,f_3,f_4,f_5,f_6$ of $K[U_3,V_3]^{\delta}$ satisfy the defining relation
\[
f_6^2=f_1^2f_4+f_2^2f_3+2f_1f_2f_5.
\]
We easily conclude from the Hilbert series
\[
H_{GL_2}(K[U_3,V_3]^{\delta},t_1,t_2,z)=\frac{1+t_1^3t_2z^2}{(1-t_1^2z)^2(1-t_1^2t_2^2z^2)^3}
\]
that $K[U_3,V_3]^{\delta}$ is a free $K[f_1,f_2,f_3,f_4,f_5]$-module with generators 1 and $f_6$, and that
the algebra $K[U_3,V_3]^{\delta}$ has the presentation
\[
K[U_3,V_3]^{\delta}\cong K[f_1,f_2,f_3,f_4,f_5,f_6\mid f_6^2=f_1^2f_4+f_2^2f_3+2f_1f_2f_5].
\]
In particular, as a vector space $K[U_3,V_3]^{\delta}$ has a basis
\[
\{f_1^{q_1}f_2^{q_2}f_3^{q_3}f_4^{q_4}f_5^{q_5},f_1^{q_1}f_2^{q_2}f_3^{q_3}f_4^{q_4}f_5^{q_5}f_6\mid q_1,q_2,q_3,q_4,q_5\geq 0\}.
\]
By Example \ref{Hilbert series for small d} the Hilbert series of $(F_3')^{\delta}$ is
\[
H_{GL_2}((F_3')^{\delta},t_1,t_2,z)=\frac{(1+t_1t_2z+t_2^2z-t_1^2t_2^2z^2)(t_1^3t_2z^2+t_1^4t_2^2z^3)}{(1-t_1^2z)^2(1-t_1^2t_2^2z^2)^3}.
\]
\[
=t_1^3t_2z^2(1+2t_1^2z)+t_1^2t_2^2z^3(2t_1^2+t_1t_2)+\cdots
\]
This suggests that the $K[U_3,V_3]^{\delta}$-module $(F_3')^{\delta}$ has a generator $c_1$ of degree $2$ and bidegree $(3,1)$. It
together with $c_1f_1$ and $c_1f_2$ give the contribution $t_1^3t_2z^2(1+2t_1^2z)$. We also expect three generators
$c_2$, $c_3$ and $c_4$ of degree $3$ and bidegree $(4,2)$, $(4,2)$ and $(3,3)$, respectively.
By easy calculations we have found the explicit form of $c_1,c_2,c_3,c_4$:
\[
c_1=[x_2,x_1],\quad c_2=[x_3,x_1]v_1-[x_2,x_1]v_2=[x_3,x_1,x_1]-[x_2,x_1,x_2],
\]
\[
c_3=[x_3,x_1]u_1-[x_2,x_1]u_2=x_1[x_3,x_1]-x_2[x_2,x_1],
\]
\[
c_4=[x_3,x_2]u_1-[x_3,x_1]u_2+[x_2,x_1]u_3=x_1[x_3,x_2]-x_2[x_3,x_1]+x_3[x_2,x_1].
\]
For example, $c_4$ is a linear combination of all elements of degree 3 and bidegree $(3,3)$ of the form:
$[x_3,x_2]u_1$, $[x_3,x_1]u_2$, and $[x_2,x_1]u_3$:
\[
c_4=\gamma_1x_1[x_3,x_2]+\gamma_2x_2[x_3,x_1]+\gamma_3x_3[x_2,x_1],
\quad \gamma_1,\gamma_2,\gamma_3\in K,
\]
and the condition $\delta(c_4)=0$ gives
\[
0=\gamma_1x_1[x_3,x_1]+\gamma_2(x_1[x_3,x_1]+x_2[x_2,x_1])+\gamma_3x_2[x_2,x_1]
\]
\[
=(\gamma_1+\gamma_2)x_1[x_3,x_1]+(\gamma_2+\gamma_3)x_2[x_2,x_1].
\]
Hence
\[
\gamma_1+\gamma_2=\gamma_2+\gamma_3=0
\]
and, up to a multiplicative constant, the only solution is
\[
\gamma_1=1,\quad \gamma_2=-1,\quad \gamma_3=1.
\]
The Hilbert series of the free $K[U_3,V_3]^{\delta}$-module generated by four elements of bidegree
$(3,1)$, $(4,2)$, $(4,2)$, and $(3,3)$ is
\[
H_{GL_2}(t_1,t_2,z)=\frac{t_1^3t_2z^2(1+(2t_1+t_2)t_2z)(1+t_1^3t_2z^2)}{(1-t_1^2z)^2(1-t_1^2t_2^2z^2)^3}.
\]
Hence
\[
H_{GL_2}(t_1,t_2,z)-H_{GL_2}((F_3')^{\delta},t_1,t_2,z)=(t_1^2-t_2^2)t_1^4t_2^2z^4+\cdots
\]
which suggests that there is a relation of bidegree $(6,2)$ and a generator of bidegree $(4,4)$.
Continuing in the same way, we have found one more generator
\[
c_5=[x_3,x_1]u_3v_1-[x_3,x_1]u_1v_3+[x_3,x_2]u_1v_2-[x_3,x_2]u_2v_1-[x_2,x_1]u_3v_2+[x_2,x_1]u_2v_3
\]
of bidegree $(4,4)$ and the relations
\[
R_1(6,2): c_1f_6=c_3f_2-c_2f_1,
\]
\[
R_2(7,3): c_2f_6=c_4f_2^2-c_1(f_1f_4+f_2f_5),
\]
\[
R_3(7,3): c_3f_6=c_4f_1f_2+c_1(f_1f_5+f_2f_3),
\]
\[
R_4(6,4): c_4f_6=c_2f_3+c_3f_5+c_5f_1,
\]
\[
R_5(6,4): c_5f_2=c_2f_5+c_3f_4,
\]
\[
R_6(7,5): c_5f_6=c_1(f_3f_4-f_5^2)+c_4(f_1f_4+f_2f_5),
\]
The above relations show that $c_jf_6$, $j=1,\ldots,5$, and $c_5f_2$ can be replaced with a linear combination of other generators.
Hence the $K[U_3,V_3]^{\delta}$-module generated by $c_1,\ldots,c_5$ is spanned by
\[
E=\{c_jf_1^{m_j}f_2^{n_j}f_3^{p_j}f_4^{q_j}f_5^{r_j}\mid m_j,n_j,p_j,q_j,r_j\geq 0,j=1,2,3,4\}
\]
\[
\cup \{c_5f_1^mf_3^pf_4^qf_5^r\mid m,p,q,r\geq 0\}.
\]
It is easy to check that the generating function of the set $E$ is equal to the Hilbert series of $(F_3')^{\delta}$.
Hence, if we show that the elements of $E$ are linearly independent, we shall conclude that
the $K[U_3,V_3]^{\delta}$-module $(F_3')^{\delta}$ is generated by $c_1,\ldots,c_5$.
Let
\[
\sum_{j=1}^5c_js_j=0,
\]
where $s_j$ are polynomials in $f_1,f_2,f_3,f_4,f_5$, $j=1,\ldots,5$, and $s_5$ does not depend on $f_2=v_1$.
We shall show that this implies that $s_j=0$, $j=1,\ldots,5$.
The bidegrees of the polynomials $f_1,f_2,f_3,f_4,f_5$ consist of pairs of even numbers
which implies that we can rewrite the equation above as
\[
c_1s_1+c_4s_4=0,\quad c_2s_2+c_3s_3+c_5s_5=0,
\]
since the only bidegrees consisting of odd numbers are the bidegrees of $c_1$ and $c_4$, which are $(3,1)$ and $(3,3)$, respectively.
We shall work in the abelian wreath product $W_3$ and shall denote by $w_i$ the coordinate of $a_i$
of $w\in W_3$. First we consider the equation $c_1s_1+c_4s_4=0$. The three coordinates $w_i$ of
\[
w=c_1s_1+c_4s_4=a_1w_1+a_2w_2+a_3w_3=0
\]
define a linear homogeneous system
\[
w_i=0,\quad i=1,2,3,
\]
with unknowns $s_1$ and $s_4$. Since $w_3=(u_1v_2-u_2v_1)s_4=0$, then $s_4=0$ and thus $s_1=0$. Now let us consider
\[
w=c_2s_2+c_3s_3+c_5s_5=0.
\]
We may assume that not all monomials of $s_2,s_3,s_5$ depend on $v_2$.
We substitute $v_2=0$ in the wreath product. Then $f_1,f_2,f_3,f_4,f_5$ become
\[
\bar f_1=u_1,\quad \bar f_2=v_1,\quad \bar f_3=u_2^2-2u_1u_3,
\]
\[
\bar f_4=-2v_1v_3, \quad \bar f_5=u_1v_3+u_3v_1
\]
and the generators $c_2,c_3$ and $c_5$ become
\[
\bar c_2=(-a_1v_3+a_3v_1)v_1, \quad \bar c_3=-a_1u_1v_3-a_2u_2v_1+a_3u_1v_1,
\]
\[
\bar c_5=a_1(u_1v_3-u_3v_1)v_3+a_2u_2v_1v_3-a_3(u_1v_3-u_3v_1)v_1
\]
in $\bar{W}_3$. Also, at least one of the polynomials $\bar s_2,\bar s_3,\bar s_5$ is different from 0.
The equality $\bar w=\bar c_2\bar s_2+\bar c_3\bar s_3+\bar c_5\bar s_5=0$ gives the equalities of the coordinates
\[
\bar w_1=-v_3(v_1\bar s_2+u_1\bar s_3-(u_1v_3-u_3v_1)\bar s_5)=0,
\]
\[
\bar w_2=-u_2v_1(\bar s_3-2v_3\bar s_5)=0,
\]
\[
v_1(v_1\bar s_2+u_1\bar s_3-(u_1v_3-u_3v_1)\bar s_5)=0.
\]
We consider these three equalities as a homogeneous system with unknowns $\bar s_2,\bar s_3,\bar s_5$. Its only solution is
\[
\bar s_2=\frac{(u_1v_3+u_3v_1)\bar s_5}{v_1},\quad \bar s_3=2v_3\bar s_5.
\]
Since we work with polynomials $\bar s_2,\bar s_3,\bar s_5$, we conclude that $v_1$ divides $\bar s_5$
which contradicts with the assumption that $s_5$ does not depend on $v_1$. Hence $\bar s_2=\bar s_3=\bar s_5=0$.
Since  $\bar f_1,\bar f_2,\bar f_3,\bar f_4$, and $\bar f_5$ are algebraically independent in $K[U_3,v_1,v_3]$,
we obtain that $s_2=s_3=s_5=0$. This completes the proof that the $K[U_3,V_3]^{\delta}$-module
$(F_3')^{\delta}$ is generated by $c_1,\ldots,c_5$.

Now we want to find generators of the algebra $F_3^{\delta}$. We start with $x_1$ and $x_2^2-2x_1x_3$
which generate $F_3^{\delta}$ modulo the ideal $(F_3')^{\delta}$.
We want to lift them to elements in $F_3^{\delta}$. Obviously, $x_1\in F_3^{\delta}$. It is easy to check that the
element $x_2^2-(x_1x_3+x_3x_1)$ belongs to $F_3^{\delta}$ and acts by multiplication on $F_3'$ in the same way as
$x_2^2-2x_1x_3$. Hence $x_2^2-(x_1x_3+x_3x_1)$ is the lifting of $x_2^2-2x_1x_3$ which we are searching for.
Then we shall take a subset of $E$
which, together with $x_1$ and $x_2^2-2x_1x_3$, generates $F_3^{\delta}$. If $w\in F_3'$, then
$wf_1=x_1w$, $wf_3=(x^2-(x_1x_3+x_3x_1))w$ and we can remove the elements of $E$
\[
c_jf_1^{m_j}f_2^{n_j}f_3^{p_j}f_4^{q_j}f_5^{r_j}, \quad j=1,2,3,4,\quad
c_5f_1^mf_3^pf_4^qf_5^r
\]
which contain $f_1$ and $f_3$. Hence
we may assume that $m_j,p_j,m,p=0$.
Also, $wf_2=wx_1-x_1w$ and we may consider only those elements of $E$ with $n_j=0$. Similarly,
\[
wf_4=w(x_2^2-(x_1x_3+x_3x_1))-(x_2^2-(x_1x_3+x_3x_1))w+2wf_5
\]
and we assume that $q_j=q=0$. Hence, as a vector space
\[
(F_3')^{\delta}=\sum_{j=1}^5K[x_1,x_2^2-2x_1x_3](c_jK[f_5])K[x_1,x_2^2-2x_1x_3].
\]
As a consequence we obtain that the algebra $F_3^{\delta}$ is generated by
\[
\{x_1,x_2^2-(x_1x_3+x_3x_1),c_jf_5^{p_j}\mid j=1,\ldots,5, p_j\geq 0\}.
\]
\end{example}

\section*{Acknowledgements}
The research of the first named author was a part of his project
in the frames of the High School Student Institute at the Institute of Mathematics and Informatics of the Bulgarian Academy of Sciences.
The research of the second named author was partially supported
by Grant Ukraine 01/0007 of the Bulgarian Science Fund for Bilateral Scientific Cooperation between Bulgaria and Ukraine.
The research of the third named author was partially supported by the
Council of Higher Education (Y\"OK) in Turkey during his visit
as a post-doctoral fellow at the Institute of Mathematics and Informatics of
the Bulgarian Academy of Sciences. He is very thankful to the Institute for the creative atmosphere and the warm hospitality
for the period when this project was carried out.
For the revised version of the paper,
the research of the second and the third named authors was partially supported by Grant I02/18 of the Bulgarian National Science Fund.

\end{document}